\documentclass[11pt,reqno]{amsart}
\usepackage[T1]{fontenc}
\usepackage[utf8]{inputenc}
\usepackage{lmodern}
\usepackage{amsmath,amssymb,mathtools}
\usepackage[a4paper,margin=29mm]{geometry}
\usepackage{microtype}
\usepackage{needspace}
\usepackage{etoolbox}
\usepackage{xcolor}
\usepackage[colorlinks=true,linkcolor=blue!40!black,citecolor=blue!40!black,
urlcolor=blue!40!black,bookmarksnumbered=true]{hyperref}
\hypersetup{pdftitle={Additive preservers of mutual strong numerical radius orthogonality},
pdfauthor={Yulong Tian and Jinli Xu}}
\newtheorem{theorem}{Theorem}[section]
\newtheorem{lemma}[theorem]{Lemma}
\newtheorem{proposition}[theorem]{Proposition}

\theoremstyle{remark}
\newtheorem{remark}[theorem]{Remark}
\newtheorem{example}[theorem]{Example}
\numberwithin{equation}{section}
\newcommand{\C}{\mathbb C}
\newcommand{\R}{\mathbb R}
\newcommand{\BH}{\mathcal B(H)}
\newcommand{\BSH}{\mathcal B_s(H)}

\newcommand{\sorth}{\mathrel{\perp}_{w}^{\,s}}
\newcommand{\morth}{\mathrel{\perp\!\!\!\perp}_{w}^{\,s}}
\DeclareMathOperator{\ran}{ran}
\DeclareMathOperator{\rank}{rank}
\DeclareMathOperator{\Span}{span}
\DeclareMathOperator{\tr}{tr}
\allowdisplaybreaks[1]

\makeatletter
\patchcmd{\@maketitle}{\global\topskip42\p@\relax}
  {\global\topskip28\p@\relax}{}
  {\PackageWarning{paper-layout}{Title spacing adjustment failed}}
\patchcmd{\@maketitle}{\dimen@34\p@}{\dimen@22\p@}{}
  {\PackageWarning{paper-layout}{Abstract spacing adjustment failed}}
\makeatother

\title[Additive numerical radius orthogonality preservers]
{Additive preservers of mutual strong numerical radius orthogonality}
\author[Y. Tian]{Yulong Tian$^{*}$}
\address[Yulong Tian]{Department of Mathematics, Northeast Forestry University, Harbin, P. R. China}
\email{tylmath@nefu.edu.cn}
\thanks{$^{*}$Corresponding author.}
\author[J. Xu]{Jinli Xu}
\address[Jinli Xu]{Department of Mathematics, Northeast Forestry University, Harbin, P. R. China}
\email{jclixv@qq.com}
\date{}
\subjclass[2020]{47B49, 15A86, 47A12}
\keywords{Numerical radius, mutual strong orthogonality, additive preserver}

\begin{document}
\begin{abstract}
Let $H$ be a complex Hilbert space with $\dim H\ge2$.
We characterize the nonzero additive maps on $\BH$ and on its self-adjoint
part $\BSH$ that preserve mutual strong numerical-radius orthogonality
in one direction, assuming surjectivity only in infinite dimension.
These maps are precisely those of the form $A\mapsto cUAU^*$, where $U$
is unitary or antiunitary, and $c$ is a nonzero complex scalar for $\BH$
and a nonzero real scalar for $\BSH$.
\end{abstract}
\maketitle

\section{Introduction and main results}\label{sec:introduction}

In a real or complex normed space, a vector $x$ is said to be
Birkhoff--James orthogonal to $y$ if $\|x+\lambda y\|\ge\|x\|$ for every
scalar $\lambda$. Classical results of Koldobsky~\cite{K1993} and Blanco and
Turn\v{s}ek~\cite{BT2006} show that every nonzero linear map preserving
this relation is a scalar multiple of an isometry.

Let $H$ be a complex Hilbert space, and let $\BH$ denote the algebra of
bounded linear operators on $H$. Replacing scalar multiples $\lambda B$
by arbitrary right multiples $BX$ yields strong Birkhoff--James
orthogonality on $\BH$, defined by $\|A+BX\|\ge\|A\|$ for every $X\in\BH$.
Graph and linear preserver problems for its mutual version, which
requires orthogonality in both directions, were investigated by
Aramba\v{s}i\'c, Guterman, Kuzma, Raji\'c, and Zhilina
in~\cite{AGKRZ2020,AGKRZ2021}, and Kuzma and
Stefanovi\'c~\cite{KS2026} subsequently extended the study to additive
preservers on finite-dimensional $C^*$-algebras.

Alongside the operator norm, another fundamental norm on $\BH$ is the
numerical radius, given by
\[
 w(A)=\sup_{\|x\|=1}|\langle x,Ax\rangle|.
\]
The Birkhoff--James orthogonality induced by this norm is called
numerical-radius orthogonality; see~\cite{ZW2020}.
Analogously, an operator $A$ is strongly numerical-radius orthogonal to $B$,
written $A\sorth B$, if
\begin{equation}\label{eq:strong-definition}
 w(A+BX)\ge w(A)\qquad\text{for every }X\in\BH,
\end{equation}
and we write $A\morth B$ when both $A\sorth B$ and $B\sorth A$ hold.
Sun, Hou, and Qi~\cite{SHQ2026} studied the graphs associated with
mutual numerical-radius orthogonality and its strong version.
These results motivate the corresponding preserver problem.

We characterize nonzero additive maps preserving mutual strong
numerical-radius orthogonality in one direction on $\BH$ and on its
self-adjoint part, assuming surjectivity only in infinite dimension.
Our first result concerns the full algebra $\BH$.

\par\medskip\noindent
\begin{minipage}{\linewidth}
\begin{theorem}\label{thm:full}
Let $H$ be a complex Hilbert space with $\dim H\ge2$, and let
$\Phi:\BH\to\BH$ be a nonzero additive map. If $H$ is infinite
dimensional, assume in addition that $\Phi$ is surjective. Then
\begin{equation}\label{eq:full-preservation}
 A\morth B\quad\Longrightarrow\quad\Phi(A)\morth\Phi(B)
 \qquad(A,B\in\BH)
\end{equation}
if and only if there exist $c\in\C\setminus\{0\}$ and a unitary or
antiunitary operator $U$ on $H$ such that
\begin{equation}\label{eq:full-standard}
 \Phi(A)=cUAU^*\qquad(A\in\BH).
\end{equation}
\end{theorem}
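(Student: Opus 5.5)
The sufficiency direction is routine, so the plan is mostly about necessity.

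\emph{Sufficiency.} Since $w(cUAU^*)=|c|\,w(A)$ for unitary or antiunitary $U$, we have
\[
cUAU^*+cUBU^*X=cU\bigl(A+B\,U^*XU\bigr)U^*,
\]
and $X\mapsto U^*XU$ is a bijection of $\BH$. Hence the relation $\sorth$, and therefore $\morth$, is invariant under $\Phi$.

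\emph{Necessity, Step 1: a workable description of $\sorth$.} Taking $X=\lambda B^*$ and using the usual Birkhoff--James-type arguments, I expect that $A\sorth B$ forces a norming condition. There should be a sequence of unit vectors $x_n$ with $|\langle x_n,Ax_n\rangle|\to w(A)$ and $\langle x_n,By\rangle\to0$ uniformly on bounded sets of $y$. In other words, $B^*x_n\to0$ along a maximizing sequence.

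In finite dimension this becomes exact. $A\sorth B$ holds iff some unit $x$ with $|\langle x,Ax\rangle|=w(A)$ lies in $\ker B^*$. For the converse, $w(A+BX)\ge|\langle x,(A+BX)x\rangle|=|\langle x,Ax\rangle|$ whenever $B^*x=0$.

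Thus $A\morth B$ means: some maximizing vector of $A$ is orthogonal to $\ran B$, and some maximizing vector of $B$ is orthogonal to $\ran A$. In particular, rank-one operators $x\otimes y$ can be compared. Any two operators with orthogonal ranges that are supported on orthogonal reducing subspaces are mutually orthogonal. Also, $A\morth 0$ holds for all $A$.

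\emph{Step 2: characterize orthogonality-theoretic classes.} I would single out the rank-one projections, or more precisely rank-one operators $P_x=x\otimes x$ up to scalars, by a property expressible through $\morth$ and additivity. For example, take the family $\{B: B\morth A\}$: it is largest when $A$ is ``small''. Alternatively, use maximal families of pairwise mutually orthogonal nonzero operators; their members should be supported on orthogonal lines, and such a family has cardinality $\dim H$.

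From additivity, $\Phi(\mathbb Q A)=\mathbb Q\Phi(A)$. Combining this with preservation, I would show that $\Phi$ maps each rank-one operator to a rank-one operator. Since the relation is only preserved in one direction, this needs care. I would use that $\Phi$ is nonzero and additive to show $\Phi(P)\ne0$ for every rank-one $P$: if some $\Phi(P)=0$, then by orthogonality and additivity the kernel of $\Phi$ spreads to all of $\BH$. In infinite dimension, surjectivity substitutes for the dimension-counting arguments used in finite dimension.

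\emph{Step 3: reduction to a known preserver theorem.} After Step 2, $\Phi$ induces a map on rays (lines) of $H$ preserving orthogonality of lines, since $x\perp y$ gives $P_x\morth P_y$. It also preserves the converse for rank-one operators, via the characterization in Step 1. An additive map sending rank-one operators to rank-one operators has a known structure, by Omladi\v{c}--\v{S}emrl. Either $\Phi(x\otimes y)=Tx\otimes Sy$, or it has the transposed form, with $T,S$ additive (semilinear in the bijective case).

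Orthogonality preservation together with Uhlhorn/Wigner-type arguments then forces $T$ and $S$ to be scalar multiples of a single unitary or antiunitary $U$. The transposed form is excluded, as is $S\neq T$, by testing $\morth$ on pairs like $x\otimes x$ and $x\otimes y+y\otimes x$. Finally, $w$-homogeneity fixes a constant $c$, and continuity forces $c$ to be complex rather than merely additive: $w$ of scaled rank-ones pins down $|\Phi(\lambda P)|$, giving boundedness and hence linearity or conjugate-linearity over $\mathbb R$. Extending from finite-rank operators to all of $\BH$ uses additivity plus orthogonality tests against rank-ones, which shows that $\Phi(A)-cUAU^*$ is orthogonal to everything and hence zero.

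\emph{Main obstacle.} Step 2 is the hardest part: showing rank-one operators go to rank-one operators with only one-directional preservation of a non-linear relation. I would first try to characterize rank-one $A$ by the following property: the set $A^{\morth}$ contains operators $B$ with $B^{\morth}\supseteq$ a maximal orthogonal family. If that fails, I would pass through the dual description of $\morth$ via maximizing vectors and work with the special pairs $P_x$, $P_y$ for $x\perp y$.
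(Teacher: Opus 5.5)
Your Step~1 characterization is false, and much of the plan rests on it. Take $H=\C^2$, $N=e_1\otimes e_2$ and $B=e_2\otimes e_2$. For every $X$, the first row of $N+BX$ is $(0,1)$, so $w(N+BX)\ge\frac12\|(N+BX)^*e_1\|=\frac12=w(N)$. The $(2,2)$ entry of $B+NX$ is $1$, so $w(B+NX)\ge1=w(B)$. Thus $N\morth B$; this is the pair $E_{12}\morth E_{22}$ the paper uses to exclude transposes. Yet the maximizing vectors of $N$ are, up to phase, $(e_1+e^{i\theta}e_2)/\sqrt2$, and none of them lies in $\ker B^*=\C e_1$. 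The witness condition of Lemma~\ref{lem:witness} is sufficient but not necessary. It is necessary for non-nilpotent rank-one operators (Lemma~\ref{lem:rankone}), but rank-one nilpotents have strictly more partners (Lemma~\ref{lem:nilpotent}). The same pair refutes your heuristic about maximal families of pairwise mutually orthogonal operators: any maximal family containing $\{E_{12},E_{22}\}$ contains the nilpotent $E_{12}$, which is not supported on any line. So the ``converse for rank-one operators'' you want to feed into Uhlhorn-type arguments is not available. Uhlhorn's theorem also needs $\dim H\ge3$.

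More fundamentally, Step~2 is the entire difficulty, and you give no mechanism for it. With preservation in one direction only, what you can transfer from $A$ to $\Phi(A)$ are consequences of $A$ having partners. These say little about the rank of $\Phi(A)$ unless you already control $\Phi$ globally, and knowing $\Phi(P)\ne0$ for rank-one $P$ is far from enough. The paper never proves rank-one preservation directly.
\begin{itemize}
\item In finite dimension, it shows that $\ker\Phi$ contains no nonzero singular matrix and deduces preservation of singularity. It then excludes an invertible kernel element by combining Dieudonn\'e's bound on spaces of singular matrices with Lemma~\ref{lem:hyperplane-span}. Only then does it apply the Fo\v{s}ner--\v{S}emrl singularity-preserver theorem. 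Rank preservation is an output of that theorem, and Lemmas~\ref{lem:span} and~\ref{lem:idempotent} then normalize the form.
\item In infinite dimension, surjectivity does not substitute for this. The paper proves $w(\Phi(A))=\kappa w(A)$ on all of $\BH$, which gives injectivity, and then uses the Bai--Hou isometry theorem.
\end{itemize}
Your last step also fails as stated. The relation $\morth$ is not compatible with subtraction, so knowing $\Phi(A)\morth cURU^*$ for $R\morth A$ says nothing about $\Phi(A)-cUAU^*$. Moreover, $\mathcal F(H)$ is not norm dense in $\BH$, so continuity cannot carry the formula from finite-rank operators to all of $\BH$.
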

\end{minipage}\par\medskip

\Needspace{8\baselineskip}
We also consider the real vector space
$\BSH=\{A\in\BH:A=A^*\}$ of self-adjoint operators.
The relations $\sorth$ and $\morth$ are inherited from $\BH$;
in particular, $X$ in \eqref{eq:strong-definition} still ranges
over all of $\BH$.
Our second result gives the corresponding classification in this
setting.

\begin{theorem}\label{thm:selfadjoint}
Let $H$ be a complex Hilbert space with $\dim H\ge2$, and let
$\Phi:\BSH\to\BSH$ be a nonzero additive map. If $H$ is infinite
dimensional, assume in addition that $\Phi$ is surjective. Then
\begin{equation}\label{eq:selfadjoint-preservation}
 A\morth B\quad\Longrightarrow\quad\Phi(A)\morth\Phi(B)
 \qquad(A,B\in\BSH)
\end{equation}
if and only if there exist $c\in\R\setminus\{0\}$ and a unitary or
antiunitary operator $U$ on $H$ such that
\begin{equation}\label{eq:selfadjoint-standard}
 \Phi(A)=cUAU^*\qquad(A\in\BSH).
\end{equation}
\end{theorem}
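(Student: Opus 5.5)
The plan is to reduce Theorem~\ref{thm:selfadjoint} to a rank-one/orthogonality-preserver problem for self-adjoint operators. Sufficiency is routine. The map $A\mapsto cUAU^*$ is additive and maps $\BSH$ onto itself when $c\in\R$. Moreover $w(cUAU^*+cUBU^*X)=|c|\,w(A+B(U^*XU'))$ after absorbing $U$ into $X$; in the antiunitary case we use that $w(JTJ^{-1})=w(T)$ and that $X\mapsto U^*XU$ is a bijection of $\BH$. Hence $\sorth$, and so $\morth$, is invariant.

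For necessity, the first step is an intrinsic description of $\morth$ on $\BSH$. For self-adjoint $A$ we have $w(A)=\|A\|$, and $w(A+BX)\ge \tfrac12\|A+BX\|$. I would show that $A\sorth B$ holds iff there is a unit vector $x$ (or, in infinite dimension, a sequence of approximate unit vectors) with $|\langle Ax,x\rangle|\to\|A\|$ and $B^*x\to 0$. The forward direction uses the standard Bhatia--Šemrl type argument adapted to $w$. For the converse, note that $\langle (A+BX)x,x\rangle=\langle Ax,x\rangle+\langle Xx,B^*x\rangle$.

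From this characterization I would extract:
\begin{itemize}
\item $P\morth Q$ for rank-one projections $P=x\otimes x$ and $Q=y\otimes y$ iff $x\perp y$;
\item $0\morth A$ for every $A$;
\item $A\morth B$ whenever $AB=0$ and both norms are attained.
\end{itemize}

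The second step characterizes rank-one operators (real multiples of rank-one projections) by $\morth$-sets. For $A\in\BSH$ consider $A^{\morth}=\{B: A\morth B\}$. I expect that rank-one $A$ have maximal such sets, i.e.\ $A^{\morth}\subseteq B^{\morth}$ forces $B\in\R A$ when $A$ is rank one. The remaining task is to show that additive $\Phi$ sends rank-ones to rank-ones.

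For this I would follow the Kuzma--Stefanovi\'c strategy. In finite dimension, use additivity to get $\Q$-linearity, hence $\Phi(\sum_i\lambda_iP_i)=\sum_i\lambda_i\Phi(P_i)$ on rational spans of orthogonal families. An orthonormal basis $\{P_i\}$ of pairwise $\morth$ projections maps to a family of pairwise $\morth$ operators. Pairwise mutual strong orthogonality of $n$ nonzero self-adjoint operators in dimension $n$ should force each of them to have norming vectors in the common kernel of the others, and this forces rank one. Showing first that $\Phi$ is injective (via a kernel argument plus nontriviality) handles the zero images. In infinite dimension, surjectivity replaces the dimension count.

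Once $\Phi$ maps rank-one projections to real multiples of rank-one projections and preserves orthogonality of their ranges, I need the scalars to be constant. Comparing $\Phi(P_x+P_y)$ with $\Phi(P_u+P_v)$ for two orthonormal bases of the same plane uses additivity to relate the scalars, so $\Phi(x\otimes x)=c\,\phi(x)\otimes\phi(x)$ with a fixed $c$. Then $\phi$ preserves orthogonality, and transition probabilities follow via Gleason/Uhlhorn-type arguments. When $\dim H\ge3$, Uhlhorn's theorem gives $\phi$ induced by a unitary or antiunitary $U$. When $\dim H=2$, I would use additivity directly, since $\Phi$ is additive on the span of rank-ones, and argue that it induces an orthogonality-preserving real-linear map on the Bloch sphere, hence orthogonal, hence Wigner.

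Finally, additivity together with the boundedness forced by norm considerations ($\Phi$ is order-preserving or anti-preserving on projections) extends the identity from finite-rank operators to all of $\BSH$. In infinite dimension one needs an extra argument: since $\Phi(A)-cUAU^*$ is $\morth$-orthogonal to every rank-one appropriately, it must vanish.

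\textbf{Main obstacle.} The step I expect to be hardest is proving that $\Phi$ preserves rank-one operators when it is merely additive, not real-linear. Real homogeneity is not automatic. I would first try to derive monotonicity of $t\mapsto\Phi(tP)$ from the norm-attaining characterization of $\morth$, which would upgrade $\Q$-linearity to $\R$-linearity.
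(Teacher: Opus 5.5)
\textbf{Gap 1: rank-one preservation.} Your argument needs $\Phi$ to send rank-one operators to rank-one operators, and the mechanism you propose for this is false. Pairwise mutual strong orthogonality of $n$ nonzero self-adjoint matrices in dimension $n$ does not force rank one. In $\C^3$ take $C_1=\mathrm{diag}(1,1,0)$, $C_2=\mathrm{diag}(0,1,1)$, $C_3=\mathrm{diag}(1,0,1)$. Each has $w(C_i)=1$. For every ordered pair $(i,j)$, the basis vector spanning $\ker C_j$ satisfies $\langle e_k,C_ie_k\rangle=1$; for instance $e_1$ gives $C_1\sorth C_2$ and $e_3$ gives $C_2\sorth C_1$. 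By Lemma~\ref{lem:witness} the three matrices are pairwise $\morth$, yet all have rank two. Pairwise relations give, for each $j$, \emph{some} norming vector of $C_i$ in $\ker C_j$, not one common to all $j$.

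Your maximality description of the neighborhoods of rank-one operators cannot be transported through $\Phi$ either. At that stage $\Phi$ preserves $\morth$ only forward, and you have shown neither bijectivity nor that $\Phi^{-1}$ preserves the relation. Everything after this step rests on it: the constant scalar, the Uhlhorn step, and the extension from finite rank. Note also that Uhlhorn's theorem needs orthogonality preserved in both directions together with surjectivity on rays.

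\textbf{Gap 2: real linearity and injectivity.} Real homogeneity is left open; monotonicity of $t\mapsto\Phi(tP)$ is not available. The paper gets it from boundedness. The relation $P\morth I-P$ gives $w(\Phi(P))\le w(\Phi(I))$ for every rank-one projection $P$. For $|t|\le1$, use $tP_x\morth tP_y$ together with
\[
 t(P_x-P_y)=P_{ax+by}-P_{bx+ay},\qquad a=\sqrt{(1+t)/2},\quad b=\sqrt{(1-t)/2}.
\]
Taking $X=-I$ then yields $w(\Phi(tP_x))\le 2w(\Phi(I))$. An additive function bounded near zero is real linear. In infinite dimension the same role is played by $P_x+A\morth P_y+A$ for $A$ vanishing on $x,y$, combined with a three-block decomposition.

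Injectivity is also not just ``a kernel argument plus nontriviality''. The paper treats $\dim H=2$ separately. For $\dim H\ge3$ it uses the estimate $2\|C\|\le\|C-D\|\le\|D\|$ and spectral projections of kernel elements that have no nonzero eigenvalues.

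\textbf{How the paper proceeds.} It bypasses rank-one preservation entirely. Once continuity, real linearity and injectivity are in hand, it shows $A\morth I-|A|/\|A\|$ for every nonzero noninvertible $A$. Hence $\Phi$ preserves noninvertibility, and $\Phi^{-1}$ preserves invertibility. The Li--Rodman--\v{S}emrl invertibility-preserver theorem then gives $\Phi(A)=\varepsilon TAT^*$ or $\varepsilon TA^{\mathrm t}T^*$. Preservation of orthogonality between rank-one projections finally forces $T^*T=\rho I$.
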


We prove Theorem~\ref{thm:full} separately in finite and infinite
dimensions (Sections~\ref{sec:finite} and~\ref{sec:infinite}).
In finite dimension, we first establish real linearity and show that
$\ker\Phi$ contains no nonzero singular matrix. This yields preservation
of singularity. We then prove that $\ker\Phi=\{0\}$---and hence automatic
bijectivity---by combining a dimension bound for spaces of singular
matrices with an analysis of the subspaces spanned by operators mutually
strongly orthogonal to a singular matrix. This permits us to
apply the additive singularity-preserver theorem of Fo\v{s}ner and
\v{S}emrl~\cite{FS2005}, as formulated
in~\cite[Theorem~1.4]{KS2026}. Examining the action of $\Phi$ on rank-one
nilpotents and orthogonal projections then reduces the resulting
representations to~\eqref{eq:full-standard}.

In infinite dimension, we first prove automatic continuity and real
linearity. An iterative argument with orthogonal finite-rank
perturbations then shows that $\Phi$ scales the numerical radius by a
constant factor on finite-rank operators. This scaling identity is
extended to all of $\BH$ through an upper bound via averaged compressions
and a lower bound via the directional derivative of the numerical radius
at rank-one projections. For surjective maps, the numerical-radius
distance-isometry theorem of Bai and Hou~\cite[Theorem~$1'$]{BH2004}
gives the desired classification after we exclude the adjoint alternative
involving $A^*$.

For the self-adjoint case (Section~\ref{sec:selfadjoint}), we establish
automatic continuity, real linearity, and injectivity, followed by
preservation of noninvertibility. Consequently, $\Phi$ is bijective
(using the assumed surjectivity in infinite dimension) and $\Phi^{-1}$
preserves invertibility, so we may apply the
invertibility-preserver theorem of Li, Rodman, and
\v{S}emrl~\cite[Theorem~6]{LRS2003} to $\Phi^{-1}$. Finally, preservation
of orthogonality between rank-one projections reduces the resulting
congruence representation to~\eqref{eq:selfadjoint-standard}.

\section{Numerical radius and rank-one operators}\label{sec:preliminaries}

The inner product is conjugate-linear in its first variable. For $x,y\in H$,
the operator $x\otimes y$ is defined by $(x\otimes y)z=x\langle y,z\rangle$.
If $x$ is a unit vector, put
$P_x=x\otimes x$. More generally, $P_L$ denotes the orthogonal projection onto a
closed subspace $L$.
A unit vector $m$ is a numerical-radius maximizing vector for $A$ if
$|\langle m,Am\rangle|=w(A)$; we call $\C m$ a maximizing ray for $A$.
We shall use the elementary identities and inequalities
\begin{equation}\label{eq:basic-radius}
 \tfrac12\|T\|\le w(T)\le\|T\|,\qquad
 w(S\oplus T)=\max\{w(S),w(T)\},\qquad
 w(P_LT|_L)\le w(T).
\end{equation}
An operator $A$ is supported on a closed subspace $L$ if $A=P_LAP_L$.
Operators with orthogonal supports satisfy $A\morth B$, by compression
to their respective supports.
Both orthogonality relations are unchanged when $A$ and $B$ are
independently multiplied by nonzero complex scalars.
Taking $X=-I$ in~\eqref{eq:strong-definition}, we also obtain
\begin{equation}\label{eq:self-orthogonality}
 A\sorth A\quad\Longleftrightarrow\quad A=0.
\end{equation}
\begin{lemma}\label{lem:standard}
Let $c\in\C\setminus\{0\}$, and let $U$ be unitary or antiunitary on $H$.
The map $T\mapsto cUTU^*$ is additive and bijective and preserves
$\sorth$ and $\morth$ in both directions. Moreover,
\[
 w(cUTU^*)=|c|w(T),\qquad \|cUTU^*\|=|c|\|T\|.
\]
\end{lemma}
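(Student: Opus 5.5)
Write $\Psi(T)=cUTU^*$. The plan is to check the elementary algebraic facts first, then the two metric identities, and finally deduce preservation of $\sorth$ from the identity $w(\Psi(T))=|c|\,w(T)$ together with the fact that $\Psi$ maps $\BH$ onto itself.

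\emph{Additivity and bijectivity.} These are immediate. If $U$ is unitary, $\Psi$ is complex linear. If $U$ is antiunitary, $U^*=U^{-1}$ is antiunitary as well, and $T\mapsto UTU^*$ is conjugate-linear. In either case $UTU^*$ is a bounded linear operator, because the two conjugate-linear factors compose to a linear map. So $\Psi$ is additive, and its inverse is $T\mapsto c^{-1}U^*TU$, which has the same form.

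\emph{The norm and numerical-radius identities.} The norm identity follows since $U$ and $U^*$ are isometric, bijective maps of $H$. For the numerical radius, substitute $x=Uy$, which runs over all unit vectors of $H$ as $y$ does. Then
\[
 \langle Uy,UTU^*Uy\rangle=\langle Uy,UTy\rangle .
\]
In the unitary case this equals $\langle y,Ty\rangle$. In the antiunitary case it equals $\overline{\langle y,Ty\rangle}$, because $\langle Ua,Ub\rangle=\overline{\langle a,b\rangle}$. Either way the modulus is $|\langle y,Ty\rangle|$, and taking suprema and multiplying by $|c|$ gives $w(\Psi(T))=|c|\,w(T)$.

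\emph{Preservation of $\sorth$.} Suppose $A\sorth B$, and let $Y\in\BH$ be arbitrary. Put $X=U^*YU\in\BH$, which is linear by the remark above, so that $UXU^*=Y$. Since $\Psi$ is multiplicative up to the factor $c$,
\[
 \Psi(B)\,Y=cUBU^*UXU^*=\Psi(BX).
\]
Additivity then gives $\Psi(A)+\Psi(B)Y=\Psi(A+BX)$. Hence
\[
 w\bigl(\Psi(A)+\Psi(B)Y\bigr)=|c|\,w(A+BX)\ge|c|\,w(A)=w(\Psi(A)),
\]
so $\Psi(A)\sorth\Psi(B)$.

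Applying the same argument to $\Psi^{-1}$, which has the same form, gives the converse implication. Preservation of $\morth$ in both directions follows immediately from preservation of $\sorth$ in both directions.

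The only point needing care is the antiunitary case, namely checking that $U^*YU$ is again complex linear and that the conjugation appearing in the inner product does not affect moduli; I expect no real obstacle here.
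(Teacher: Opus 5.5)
Your proof is correct and follows the paper's argument: invariance of $w$ and $\|\cdot\|$ under unitary or antiunitary conjugation, combined with the change of variables $X=U^*YU$ in the defining inequality \eqref{eq:strong-definition}. One small slip: when $U$ is antiunitary, $U(\lambda S)U^*=\overline{\lambda}\,USU^*$, so the inverse is $T\mapsto\overline{c}^{\,-1}U^*TU$ rather than $c^{-1}U^*TU$; it still has the same form, so your converse implication goes through unchanged.
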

This follows from invariance under unitary and antiunitary conjugation
and the change of variables $Y=U^*XU$ in~\eqref{eq:strong-definition}.

\Needspace{11\baselineskip}
\begin{lemma}[{\cite[Proposition~2.2]{SHQ2026}}]\label{lem:witness}
Let $A,B\in\BH$. If there are unit vectors $x_j$ such that
\[
 |\langle x_j,Ax_j\rangle|\longrightarrow w(A),\qquad \|B^*x_j\|\longrightarrow0,
\]
then $A\sorth B$. In particular, $|\langle x,Ax\rangle|=w(A)$ and $B^*x=0$ for a unit
vector $x$ imply $A\sorth B$.
\end{lemma}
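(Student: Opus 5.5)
The plan is to prove the contrapositive form directly from the definition~\eqref{eq:strong-definition}. Fix an arbitrary $X\in\BH$. It suffices to show that $w(A+BX)\ge w(A)$. The key tool is the elementary estimate $w(T)\ge|\langle x,Tx\rangle|$, valid for every unit vector $x$, applied with $T=A+BX$ and $x=x_j$.

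For each $j$ we write
\[
 \langle x_j,(A+BX)x_j\rangle=\langle x_j,Ax_j\rangle+\langle x_j,BXx_j\rangle
 =\langle x_j,Ax_j\rangle+\langle B^*x_j,Xx_j\rangle .
\]
The cross term obeys $|\langle B^*x_j,Xx_j\rangle|\le\|B^*x_j\|\,\|X\|\,\|x_j\|=\|B^*x_j\|\,\|X\|$, which tends to $0$ by hypothesis. Consequently
\[
 w(A+BX)\ge|\langle x_j,(A+BX)x_j\rangle|\ge|\langle x_j,Ax_j\rangle|-\|X\|\,\|B^*x_j\| .
\]
Letting $j\to\infty$, the right-hand side converges to $w(A)$, which gives $w(A+BX)\ge w(A)$. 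Since $X$ was arbitrary, we conclude $A\sorth B$.

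The ``in particular'' statement is the special case of the constant sequence $x_j=x$. In that case $|\langle x,Ax\rangle|=w(A)$ and $\|B^*x\|=0$ for all $j$, so the hypotheses of the first part hold trivially.

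There is no real obstacle. The only point needing care is the convention that the inner product is conjugate-linear in the first variable, so that moving $B$ across gives $\langle x_j,BXx_j\rangle=\langle B^*x_j,Xx_j\rangle$. Also, the estimate must be uniform in $j$ only through the fixed constant $\|X\|$, which it is.
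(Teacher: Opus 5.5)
Your proof is correct and is essentially the paper's argument: fix $X$, bound the cross term by $\|B^*x_j\|\,\|X\|\to0$, and pass to the limit in $w(A+BX)\ge|\langle x_j,(A+BX)x_j\rangle|$. One small point of wording: this is a direct proof, not a proof of the contrapositive.
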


\begin{proof}
For each fixed $X\in\BH$,
\[
 |\langle x_j,BXx_j\rangle|\le\|B^*x_j\|\,\|X\|\longrightarrow0.
\]
Hence $w(A+BX)\ge\lim_j|\langle x_j,(A+BX)x_j\rangle|=w(A)$.
\end{proof}

\begin{lemma}\label{lem:rankone}
Let $R=u\otimes v\ne0$ be a rank-one operator. Then
\begin{equation}\label{eq:rankone-radius}
 w(R)=\frac{\|u\|\|v\|+|\langle v,u\rangle|}{2}.
\end{equation}
If $\langle v,u\rangle\ne0$, there is a unique numerical-radius maximizing ray $\C m$,
where $\|m\|=1$, and
\begin{equation}\label{eq:rankone-kernel}
 R\sorth B\quad\Longleftrightarrow\quad B^*m=0.
\end{equation}
\end{lemma}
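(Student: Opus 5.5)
We handle the three assertions of Lemma~\ref{lem:rankone} in turn: the formula~\eqref{eq:rankone-radius}, the uniqueness of the maximizing ray, and the kernel criterion~\eqref{eq:rankone-kernel}.

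\emph{The formula.} For a unit vector $x$ we have $\langle x,Rx\rangle=\langle x,u\rangle\langle v,x\rangle$, so $w(R)=\sup_{\|x\|=1}|\langle x,u\rangle||\langle v,x\rangle|$. Both factors depend only on the component of $x$ in $\Span\{u,v\}$, and scaling that component up to norm one only increases the product. Hence the supremum may be taken over unit vectors in the (at most) two-dimensional space $\Span\{u,v\}$, and it is attained.

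We normalize to $u=\|u\|e$ and $v=\|v\|f$ with unit vectors $e,f$, and write $\cos\theta=|\langle e,f\rangle|$. Maximizing $|\langle x,e\rangle\langle f,x\rangle|$ over unit $x$ is then a classical two-dimensional computation. Using the identity $ab=\tfrac14\bigl(|a+b|^2-|a-b|^2\bigr)$ after rotating phases, or simply parametrizing $x$ by angles, the maximum is $(1+\cos\theta)/2$. It is attained exactly when $x$ is a phase multiple of the normalized bisector $m=(e+\bar\mu f)/\|e+\bar\mu f\|$, where $\mu$ is the phase chosen so that $\langle e,\bar\mu f\rangle\ge0$. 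This gives~\eqref{eq:rankone-radius}.

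\emph{Uniqueness of the ray.} When $\langle v,u\rangle\ne0$, the two-dimensional maximization has a unique maximizer up to phase: equality in the Cauchy--Schwarz-type step forces $x$ to lie along the bisector. This is the step requiring the most care, since one must check that no maximizer has a component off $\Span\{u,v\}$; but any such component would strictly reduce the product after renormalization, because the product is nonzero at the maximum ($\cos\theta>0$ makes the maximum positive). In the degenerate case $u\parallel v$, the bisector is simply $e$.

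\emph{The criterion~\eqref{eq:rankone-kernel}.} The direction $\Leftarrow$ is exactly Lemma~\ref{lem:witness}.

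For $\Rightarrow$, suppose $B^*m\ne0$. We seek $X$ with $w(R+BX)<w(R)$, and try the small perturbation $X=-tZ$, where $BZ$ is chosen to decrease the quadratic form near the maximizing ray. Concretely, set $Z=y\otimes m$ with $y=B^*m$, so that $BZ=(By)\otimes m$ and $\langle m,BZm\rangle=\langle m,By\rangle=\|B^*m\|^2>0$. Taking $X=-t\,\overline{\lambda}Z$, where $\lambda=\langle m,Rm\rangle/w(R)$ is the unimodular phase, the value at $m$ becomes $w(R)-t\|B^*m\|^2$, up to phase.

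We then need a compactness/continuity argument to show that the maximum over all unit $x$ drops below $w(R)$ for small $t>0$. Near the unique maximizing ray, the first-order decrease dominates. Away from it, the sup of $|\langle x,Rx\rangle|$ is bounded strictly below $w(R)$; this holds because the form is determined by the component in the finite-dimensional space $\Span\{u,v,m,By\}$, where we have compactness. Since $Rx$ and $BZx$ both vanish on vectors orthogonal to that span, it suffices to maximize over $x=\alpha x_0+\beta z$ with $x_0$ in the span and $z$ orthogonal to it. Here the value is $|\alpha|^2\langle x_0,(R-t\bar\lambda BZ)x_0\rangle$ plus cross terms that vanish, and the reduction to finite dimension is immediate.

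In finite dimension, the standard fact that the derivative of the numerical radius at $R$ in direction $D$ equals $\mathrm{Re}\,\bar\lambda\langle m,Dm\rangle$ when the maximizing ray is unique gives $w(R-t\bar\lambda BZ)=w(R)-t\|B^*m\|^2+o(t)<w(R)$. Thus $R\not\sorth B$, which proves $\Rightarrow$.
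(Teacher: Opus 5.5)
Your argument is correct, but for the necessity half of \eqref{eq:rankone-kernel} you take a different route from the paper. For \eqref{eq:rankone-radius} and the maximizing vectors, the paper just quotes the classical computation, which your two-dimensional sketch reproduces.

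The paper normalizes to $R=p\otimes q$ with unit vectors and $\langle q,p\rangle=\rho>0$. It uses the perturbation $X=-(B^*(p+q))\otimes q$, which keeps $R+tBX=(p-th)\otimes q$ of rank one, where $h=BB^*(p+q)$. So \eqref{eq:rankone-radius} evaluates $w(R+tBX)$ exactly, and $2ab\le a^2+b^2$ gives $2w(R+tBX)\le 1+\rho-t\|B^*(p+q)\|^2+O(t^2)$ directly, with no compactness and no differentiation of $w$.

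Your perturbation $BX\propto (BB^*m)\otimes m$ in general produces a rank-two operator. You therefore need two extra ingredients. The first is the reduction to $\Span\{u,v,m,BB^*m\}$; this is valid, since both operators vanish on its orthogonal complement and have range inside it, so the cross terms do vanish. The second is the Danskin-type formula for the one-sided derivative of $w$ at an operator with a unique maximizing ray. The paper's route is self-contained and quantitative but specific to rank one. Yours is more conceptual and in fact shows that every operator $A$ on a finite-dimensional space with a unique maximizing ray $\C m$ satisfies $A\sorth B$ if and only if $B^*m=0$.

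One slip needs correcting. With $\lambda=\langle m,Rm\rangle/w(R)$ you must take $X=-t\lambda Z$, not $X=-t\bar\lambda Z$. With $\lambda$ you get $\langle m,(R+BX)m\rangle=\lambda\bigl(w(R)-t\|B^*m\|^2\bigr)$ and one-sided derivative $-\|B^*m\|^2$. With $\bar\lambda$ the derivative is $-\operatorname{Re}(\bar\lambda^2)\|B^*m\|^2$, which is positive when $\lambda=\pm i$. Alternatively, first multiply $R$ by a unimodular scalar so that $\lambda=1$, as the paper does.
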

\begin{proof}
Formula~\eqref{eq:rankone-radius} and the corresponding maximizing
vectors are given in~\cite[pp.~359--360]{FK1993}.
After normalizing $u,v$ and adjusting a scalar phase, it is enough to
consider $R=p\otimes q$ with unit vectors $p,q$ and $\langle q,p\rangle=\rho\ge0$.
If $\rho>0$, the unique maximizing ray is spanned by
\[
 m=\frac{p+q}{\sqrt{2+2\rho}}.
\]

The sufficiency in~\eqref{eq:rankone-kernel} follows from
Lemma~\ref{lem:witness}. For necessity, let $\rho>0$ and suppose that
$R\sorth B$ but $B^*m\ne0$. Put
\[
 h=BB^*(p+q),\qquad
 X=-\bigl(B^*(p+q)\bigr)\otimes q.
\]
Then $R+tBX=(p-th)\otimes q$. By~\eqref{eq:rankone-radius}
and the inequality $2ab\le a^2+b^2$, for $t>0$ we have
\[
 \begin{aligned}
 2w(R+tBX)
 &=\|p-th\|+|\rho-t\langle q,h\rangle|\\
 &\le \frac{1+\|p-th\|^2}{2}
      +\frac{\rho^2+|\rho-t\langle q,h\rangle|^2}{2\rho}\\
 &=1+\rho-t\|B^*(p+q)\|^2\\
 &\quad+\frac{t^2}{2}
 \left(\|h\|^2+\frac{|\langle q,h\rangle|^2}{\rho}\right).
 \end{aligned}
\]
Since $B^*(p+q)\ne0$, the last expression is smaller than
$1+\rho=2w(R)$ for all sufficiently small $t>0$.
This contradicts $R\sorth B$.
\end{proof}

The next sufficient condition will be used for rank-one nilpotents.

\begin{lemma}\label{lem:nilpotent}
Let $x,y\in H$ be orthonormal. If $A\in\BH$ satisfies
\[
 A^*x=\lambda A^*y\qquad\text{for some }|\lambda|\le1,
\]
then $x\otimes y\sorth A$.
\end{lemma}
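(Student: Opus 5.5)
The plan is to bound $w(x\otimes y+AX)$ from below by testing it on the numerical-radius maximizing vectors of $R=x\otimes y$, and then to use the hypothesis on $A^*$ to turn this into a maximum-modulus statement for a polynomial of degree at most two. The witness criterion of Lemma~\ref{lem:witness} is not enough on its own. For a unit vector $m=\alpha x+\beta y$ we have $A^*m=(\alpha\lambda+\beta)A^*y$, and a maximizing vector of $R$ needs $|\alpha|=|\beta|=1/\sqrt2$, so $A^*m=0$ is possible only when $|\lambda|=1$. For $|\lambda|<1$ we must therefore show directly that $w(R+AX)\ge \tfrac12=w(R)$ for every $X\in\BH$.

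Fix $X$ and put $T=R+AX$. For $\zeta=e^{i\theta}$ let $m_\zeta=(x+\zeta y)/\sqrt2$, a unit vector. Since $Rm_\zeta=\zeta x/\sqrt2$, we get $\langle m_\zeta,Rm_\zeta\rangle=\zeta/2$. Using $A^*m_\zeta=(\lambda+\zeta)A^*y/\sqrt2$ and the conjugate-linearity of the inner product in its first slot,
\[
 \langle m_\zeta,AXm_\zeta\rangle=\langle A^*m_\zeta,Xm_\zeta\rangle
 =\tfrac12\,\overline{(\lambda+\zeta)}\,(a+b\zeta),
\]
where $a=\langle A^*y,Xx\rangle$ and $b=\langle A^*y,Xy\rangle$. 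On the unit circle $\overline{(\lambda+\zeta)}=\bar\zeta(1+\bar\lambda\zeta)$. Hence
\[
 2\,|\langle m_\zeta,Tm_\zeta\rangle|
 =\bigl|1+\bar\zeta^{\,2}(1+\bar\lambda\zeta)(a+b\zeta)\bigr|
 =|g(\zeta)|,
\]
where $g(\zeta)=1+\zeta^{-2}(1+\bar\lambda\zeta)(a+b\zeta)$. It therefore suffices to prove $\max_{|\zeta|=1}|g(\zeta)|\ge1$, because $w(T)\ge|\langle m_\zeta,Tm_\zeta\rangle|$ for each $\zeta$.

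The key step is the maximum modulus principle on the exterior domain $\{|\zeta|\ge1\}\cup\{\infty\}$. The function $g$ is a rational function whose only pole is at $0$, so it is holomorphic on this domain, including at $\infty$, where $g(\infty)=1+\bar\lambda b$. There are two cases.
\begin{itemize}
 \item If $\lambda\ne0$, set $\zeta_0=-1/\bar\lambda$. Then $|\zeta_0|\ge1$ because $|\lambda|\le1$, and the factor $1+\bar\lambda\zeta$ vanishes at $\zeta_0$, so $g(\zeta_0)=1$.
 \item If $\lambda=0$, we have $g(\infty)=1$ directly.
\end{itemize}
In either case $|g|$ attains the value $1$ somewhere in the closed exterior domain. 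By the maximum principle its supremum there is attained on the unit circle, so $\max_{|\zeta|=1}|g|\ge1$. This gives $w(T)\ge\tfrac12=w(R)$, using $w(R)=\tfrac12$ from~\eqref{eq:rankone-radius}.

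The only real obstacle is finding this reduction. Cruder estimates, such as bounding $|\langle z,Tv\rangle|$ with $z=x-\lambda y$, which lies in $\ker A^*$, lose a factor $\sqrt{1+|\lambda|^2}$. The point is that the constraint $|\lambda|\le1$ is exactly what places the zero of $1+\bar\lambda\zeta$ outside the open unit disc, where the maximum principle can use it.
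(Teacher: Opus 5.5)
Your proof is correct, and it takes a different route from the paper's.

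The paper compresses $x\otimes y+AX$ to $M=\Span\{x,y\}$. It notes that $C=P_MAX|_M$ again satisfies $C^*x=\lambda C^*y$, and then cites \cite[Corollary~2.5]{SHQ2026} for the two-dimensional relation $(x\otimes y)|_M\sorth C$. So the substantive inequality is imported from outside.

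You prove that inequality directly, and the pieces check out:
\begin{itemize}
\item Your computation only involves the compression, through $a$ and $b$.
\item The identity $2|\langle m_\zeta,(x\otimes y+AX)m_\zeta\rangle|=|g(\zeta)|$ is right.
\item Writing $g(1/u)=1+(u+\bar\lambda)(au+b)$ shows at once that this polynomial equals $1$ at $u=-\bar\lambda$. That point lies in the closed unit disc precisely because $|\lambda|\le1$, so the maximum modulus principle finishes the argument.
\item The same substitution also removes your case split between $\lambda\ne0$ and $\lambda=0$.
\end{itemize}

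Your approach gives a self-contained, elementary argument that makes the role of the hypothesis $|\lambda|\le1$ transparent. It also gives a slightly sharper conclusion: for each $X$, one of the maximizing vectors $m_\zeta$ of $x\otimes y$ itself already satisfies $|\langle m_\zeta,(x\otimes y+AX)m_\zeta\rangle|\ge\tfrac12$. The paper's version is shorter on the page but relies on the external corollary.
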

\begin{proof}
Put $N=x\otimes y$ and $M=\Span\{x,y\}$. For each $X\in\BH$, let
$C=P_MAX|_M$. Then $C^*x=\lambda C^*y$. If $C\ne0$,
\cite[Corollary~2.5]{SHQ2026} gives $N|_M\sorth C$, and hence
$w(N|_M+C)\ge1/2$; this inequality is immediate when $C=0$.
By compression,
\[
 w(N+AX)\ge w(N|_M+C)\ge\tfrac12=w(N).
\]
\end{proof}

\Needspace{10\baselineskip}
The angle estimate below appears in the proof
of~\cite[Proposition~4.1]{SHQ2026} in finite dimension.
The equality case can also be deduced from
\cite[Corollary~4, $n=1$]{GW2009}.

\begin{lemma}\label{lem:angle}
Let $0\ne A\in\BH$, and let $m,k$ be unit vectors such that
$|\langle m,Am\rangle|=w(A)$ and $A^*k=0$. Then
\begin{equation}\label{eq:angle}
 |\langle m,k\rangle|\le\frac1{\sqrt2}.
\end{equation}
If equality holds, there is a two-dimensional reducing subspace $M$ of
$A$ such that, relative to $H=M\oplus M^\perp$,
\begin{equation}\label{eq:equality-block}
 A=R\oplus D,\qquad \rank R=1,\quad R^2=0,\quad
 w(D)\le w(R)=w(A).
\end{equation}
\end{lemma}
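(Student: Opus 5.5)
We need to prove Lemma~\ref{lem:angle}: $|\langle m,k\rangle|\le 1/\sqrt2$, with a structural description in the equality case.

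The plan is to use the first-order optimality condition at the maximizing vector $m$. After multiplying $A$ by a unimodular scalar we may assume $\langle m,Am\rangle=w(A)>0$. Put $\operatorname{Re}A=(A+A^*)/2$. Then $\langle x,(\operatorname{Re}A)x\rangle\le w(A)$ for every unit $x$, and equality holds at $x=m$. Hence $m$ is an eigenvector of $\operatorname{Re}A$:
\[
(A+A^*)m=2w(A)m .
\]
This is the standard first-order condition, obtained by maximizing a quadratic form over the unit sphere.

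Now take the inner product with $k$. Since $A^*k=0$, we have $\langle k,Am\rangle=\langle A^*k,m\rangle=0$. Therefore
\[
\langle k,A^*m\rangle=2w(A)\langle k,m\rangle,
\]
that is, $\langle Ak,m\rangle=2w(A)\langle k,m\rangle$.

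To bound the left side, decompose $k=\alpha m+k'$ with $k'\perp m$ and $\alpha=\langle m,k\rangle$. Also $\langle k,Ak\rangle=\langle A^*k,k\rangle=0$. The key estimate uses unit vectors in $\Span\{m,k\}$. On this two-dimensional space, consider the compression $C$ of $A$, and write it in the orthonormal basis $m,e$, where $e=k'/\|k'\|$. We have $\langle m,Cm\rangle=w$ with $w=w(A)$, and $w(C)\le w$ by~\eqref{eq:basic-radius}. The conditions $\langle k,Am\rangle=0$ and $\langle k,Ak\rangle=0$ give linear relations among the matrix entries. Since $w(C)=w$ is attained at $m$, the eigenvector condition for $C+C^*$ at $m$ gives $c_{em}+\overline{c_{me}}=0$.

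For a $2\times2$ matrix with $w(C)\le w$ and $c_{mm}=w$, the numerical range is an ellipse inside the disk of radius $w$ that touches the circle at $w$. The constraints then force $|c_{me}|\le w$. Combining $\langle k,Am\rangle=0$, i.e.\ $\bar\alpha w+\|k'\|c_{em}=0$, with the bound $|c_{em}|\le w$ yields $|\alpha|\le\|k'\|$. Since $|\alpha|^2+\|k'\|^2=1$, this gives $|\alpha|\le 1/\sqrt2$.

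The equality case is where I expect the main obstacle to lie. Equality forces $|c_{em}|=w$. A $2\times 2$ matrix with $w(C)=w$ and an off-diagonal entry of modulus $w$ (the extremal $\|\cdot\|\le 2w$ situation) must be a rank-one nilpotent with $w(C)=\|C\|/2$. So the compression $R$ of $A$ to $M=\Span\{m,k\}$ is rank one with $R^2=0$.

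It remains to show that $M$ is reducing, i.e.\ that the off-diagonal blocks $P_{M^\perp}AP_M$ and $P_MAP_{M^\perp}$ vanish. For this I would perturb: for a unit $z\in M^\perp$, vary the maximizing vector $m$ along $z$ and use maximality of $|\langle x,Ax\rangle|$ at every point of the maximizing ellipse of $R$. For a rank-one nilpotent, the numerical range is a disk whose boundary is entirely attained, so every boundary-attaining vector in $M$ is maximizing for $A$. The eigenvector condition $(e^{-i\theta}A+e^{i\theta}A^*)x_\theta=2w\,x_\theta$ holds for a whole family of $x_\theta$ spanning $M$, for all $\theta$. Taking $M^\perp$ components and varying $\theta$ separates $A$ and $A^*$ terms, giving $P_{M^\perp}Ax=0$ and $P_{M^\perp}A^*x=0$ for $x\in M$. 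Hence $A=R\oplus D$, and $w(D)\le w(A)=w(R)$ follows from~\eqref{eq:basic-radius}. Alternatively, the equality statement could be cited from \cite[Corollary~4, $n=1$]{GW2009}.
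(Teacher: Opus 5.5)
Your proof of the inequality is the paper's argument in matrix-entry language. Your first-order condition $(\operatorname{Re}A)m=wm$ is the paper's $Gm=rm$, and $\bar\alpha w+\|k'\|c_{em}=0$ is its identity $0=sr+it\langle e_2,Km\rangle$. However, the decisive bound $|c_{em}|\le w$ is only gestured at through the ellipse picture. The one-line reason is this: since $(\operatorname{Re}A)m=wm\perp e$, you have $c_{em}=\langle e,Am\rangle=i\langle e,(\operatorname{Im}A)m\rangle$, and $\|\operatorname{Im}A\|\le w(A)$. This is exactly the paper's estimate $\|Km\|\le r$. You should also note that $k'\ne0$, because $k\parallel m$ would give $\langle m,Am\rangle=\langle A^*m,m\rangle=0$.

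In the equality case, the principle you invoke is false: a $2\times2$ matrix with $w(C)=w$ and an off-diagonal entry of modulus $w$ need not be a rank-one nilpotent. For example, $\bigl(\begin{smallmatrix}0&w\\ w&0\end{smallmatrix}\bigr)$ is self-adjoint with numerical radius $w$. Moreover, an entry of modulus $w$ is not the extremal case of $\|C\|\le 2w(C)$; that would need modulus $2w$. The conclusion does hold here, and it follows from data you already have. Because $k\in M$, $C^*k=P_MA^*k=0$, so $C$ is singular. Using $c_{mm}=w$, $c_{me}=-\overline{c_{em}}$ and $|c_{em}|=w$, this gives $\det C=w\,c_{ee}+w^2=0$. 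Hence $c_{ee}=-w$, $\tr C=0$, and $C^2=0$ by Cayley--Hamilton. Your own relation $\langle k,Ak\rangle=0$ also yields $c_{ee}=-w$ directly.

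With that repaired, your proof that $M$ reduces $A$ is correct and genuinely different from the paper's. Write $C=2w\,g\otimes f$ with orthonormal $f,g$, and put $x_\theta=(f+e^{-i\theta}g)/\sqrt2$. Then $\langle x_\theta,Ax_\theta\rangle=we^{i\theta}$, so $\operatorname{Re}(e^{-i\theta}A)x_\theta=wx_\theta$ for every $\theta$. Set $a_1=P_{M^\perp}Af$, $a_2=P_{M^\perp}Ag$, $b_1=P_{M^\perp}A^*f$, $b_2=P_{M^\perp}A^*g$. The $M^\perp$-component of the identity reads $e^{-i\theta}a_1+e^{-2i\theta}a_2+e^{i\theta}b_1+b_2=0$, and independence of the four exponentials forces all four vectors to vanish.

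The paper instead works only at the single vector $m$. Equality forces $Km=ire_2$ with $\|Km\|=r$ and $\|K\|\le r$. Then $r^2I-K^2\ge0$ gives $K^2m=r^2m$, hence $Ke_2=-irm$, and $A_0^*k=0$ yields $Ge_2=-re_2$. So $\Span\{m,e_2\}$ is invariant under both $G$ and $K$, and the nilpotent block is read off at the same time.

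The paper's route is shorter and produces the block and the reducing property in one stroke. Yours needs the block identified first, but it explains reducibility geometrically, through first-order conditions along the whole circle of maximizing vectors of the compression.
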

\begin{proof}
Put $r=w(A)>0$ and $A_0=e^{-i\arg(\langle m,Am\rangle)}A=G+iK$, where $G,K$ are
self-adjoint. Since $\langle m,A_0m\rangle=r$ and $\|G\|,\|K\|\le r$, we have
$\langle m,Gm\rangle=r$ and $\langle m,Km\rangle=0$.
Hence $Gm=rm$ and $Km\perp m$.
Since $\langle m,Am\rangle\ne0$, the vector $k$ cannot be parallel to $m$.
After changing the phase of $k$, write
\[
 k=sm+te_2,\qquad s=|\langle m,k\rangle|\ge0,\quad t=\sqrt{1-s^2}>0,
 \quad e_2\perp m,\quad\|e_2\|=1.
\]
Since $A_0^*k=0$, the identity $\langle k,A_0m\rangle=0$ gives
\[
 0=sr+it\,\langle e_2,Km\rangle,\qquad sr\le t\|Km\|\le tr.
\]
This proves~\eqref{eq:angle}.

If $s=t=1/\sqrt2$, equality forces $Km=ir e_2$ and $\|Km\|=r$.
Since $r^2I-K^2\ge0$ and
$\langle m,(r^2I-K^2)m\rangle=0$, we have $K^2m=r^2m$,
and therefore $Ke_2=-ir m$.
Now $0=\sqrt2 A_0^*k=(G-iK)(m+e_2)=Ge_2+re_2$, so $Ge_2=-re_2$.
Consequently $\Span\{m,e_2\}$ reduces $A_0$, and its matrix there is
\[
 r\begin{pmatrix}1&1\\-1&-1\end{pmatrix}.
\]
This is a rank-one nilpotent with numerical radius $r$. Restoring the
scalar phase gives~\eqref{eq:equality-block}.
\end{proof}

\section{Neighborhood spans}\label{sec:spans}

For $A\in\BH$, set
\begin{equation}\label{eq:neighborhood-definition}
 \mathcal N(A)=\{B\in\BH:A\morth B\},\qquad
 \mathcal V(A)=\Span_{\C}\mathcal N(A).
\end{equation}
We call $\mathcal N(A)$ the mutual neighborhood of $A$.
All linear spans in this paper are algebraic spans.
Since $\mathcal N(A)$ is closed under complex scalar multiplication,
its real and complex linear spans coincide.

We next distinguish nilpotent and nonnilpotent rank-one operators by
the linear spans of their mutual neighborhoods.

\begin{lemma}\label{lem:span}
Let $\dim H\ge2$, and let $R\in\BH$ have rank one. Then
\[
 \mathcal V(R)=\BH\quad\Longleftrightarrow\quad R^2=0.
\]
\end{lemma}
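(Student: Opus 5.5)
The forward implication is the easy one, and I would treat it first. Suppose $R=u\otimes v$ is rank one with $R^2\ne0$. Since $R^2=\langle v,u\rangle\,u\otimes v$, this means $\langle v,u\rangle\ne0$. Lemma~\ref{lem:rankone} then gives a unique maximizing ray $\C m$, and \eqref{eq:rankone-kernel} shows that every $B\in\mathcal N(R)$ satisfies $B^*m=0$. The set $\{B\in\BH:B^*m=0\}$ is a complex linear subspace, and it is proper because it misses $P_m$. It therefore contains $\mathcal V(R)$, so $\mathcal V(R)\ne\BH$.

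For the converse, let $R^2=0$. After scaling, which does not change $\mathcal N(R)$, I may take $R=x\otimes y$ with $x,y$ orthonormal. Put $M=\Span\{x,y\}$. To show $B\in\mathcal N(R)$ I will check two things.
\begin{itemize}
\item[(a)] $R\sorth B$. By Lemma~\ref{lem:nilpotent} it suffices that $B^*(x-y)=0$ or $B^*(x+y)=0$, that is, $\ran B\perp x-y$ or $\ran B\perp x+y$.
\item[(b)] $B\sorth R$. By Lemma~\ref{lem:witness}, since $R^*=y\otimes x$, it suffices that $B$ has a maximizing vector orthogonal to $x$.
\end{itemize}

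The plan is to produce a spanning family of operators satisfying (a) and (b).
\begin{enumerate}
\item Every operator supported on $M^\perp$ lies in $\mathcal N(R)$ by orthogonal supports. This handles the $M^\perp\to M^\perp$ block; it is empty if $\dim H=2$.
\item I next take rank-one operators $B=a\otimes b$ with $a\in\{x-y\}^\perp\cup\{x+y\}^\perp$. These two hyperplanes together span $H$, so such $a$ generate all ranges.
\item For $a\otimes b$ with $\langle b,a\rangle\ne0$, the unique maximizing ray is spanned by $a/\|a\|+\mu\, b/\|b\|$ for a suitable unimodular phase $\mu$, by the normalization in Lemma~\ref{lem:rankone}. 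Condition (b) is then a single equation: this vector should be orthogonal to $x$.
\item As a fallback, for $a\perp b$ the operator $a\otimes b$ is nilpotent, and its maximizing vectors are all $(a/\|a\|+e^{i\theta}b/\|b\|)/\sqrt2$. Then (b) only requires $|\langle x,a\rangle|/\|a\|=|\langle x,b\rangle|/\|b\|$, which is a flexible, real-codimension-one condition on $b$.
\end{enumerate}

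For each fixed admissible $a$, I want to show that the set of $b$ satisfying (b) spans $H$. Consequently $a\otimes b\in\mathcal V(R)$ for all $b$, and letting $a$ range over the two hyperplanes gives $\mathcal V(R)=\BH$.

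The main obstacle is this last spanning claim. For a given $a$, I need enough $b$ satisfying the phase and modulus condition in (b).

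I would first try the nilpotent choice $b\perp a$ with matched ratios. These $b$ span $a^\perp$ once $\dim H\ge3$, or after a perturbation argument.

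I would then add one non-nilpotent $b$ with $\langle b,a\rangle\ne0$, solving the orthogonality equation from step 3 explicitly, to reach the missing direction $a$.

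The case $\dim H=2$ needs a separate explicit check. There I would write down concrete $2\times2$ matrices: for instance $P_y$, matrices with range $\C(x+y)$ or $\C(x-y)$, and $R^*=y\otimes x$ itself. For each I would verify (a) and (b) directly, and then check that they span $M_2(\C)$.
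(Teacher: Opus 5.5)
\paragraph{The spanning step fails for some admissible $a$.}
Your forward implication and your criteria (a) and (b) are correct. The step you single out as the main obstacle is false as stated: it is not true that for every admissible $a$ the vectors $b$ satisfying (b) span $H$.

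Write $\hat a=a/\|a\|$, let $\dim H\ge3$, and take $0\ne a\in M^\perp$, which lies in both hyperplanes $\{x\mp y\}^\perp$. Since $\langle x,a\rangle=0$, in the non-nilpotent case the ray $\hat a+\mu\hat b$ is orthogonal to $x$ only if $\langle x,b\rangle=0$. In the nilpotent case your modulus condition also reduces to $\langle x,b\rangle=0$. So every such $b$ lies in $x^\perp$. In particular, your nilpotent choices span only $\{a,x\}^\perp$, not $a^\perp$, contrary to your claim for $\dim H\ge3$. Likewise, for $a\in\C(x\pm y)$ only $b\in\C(x\mp y)$ qualifies, because $|\langle x,\hat a\rangle|=1/\sqrt2=\|x-\langle\hat a,x\rangle\hat a\|$.

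The cause is that (b) is too weak a certificate. For a unit vector $z\in M^\perp$, the operator $z\otimes x$ does lie in $\mathcal N(R)$, since
\[
 w(z\otimes x+RX)\ge\tfrac12\|P_{x^\perp}(z\otimes x+RX)\|=\tfrac12=w(z\otimes x),
\]
yet none of its maximizing vectors is orthogonal to $x$.

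\paragraph{How the paper's proof differs.}
This norm bound is what the paper exploits. Using $w(T)\ge\frac12\|T^*x\|$ and $w(T)\ge\frac12\|P_{x^\perp}T\|$, it places every operator with range in $x^\perp$ in $\mathcal V(R)$ at once. It then obtains the operators $x\otimes b$ from $F-P_{x^\perp}F$ for a few explicit nilpotents $F\in\mathcal N(R)$. This needs no separate case $\dim H=2$ and no maximizing-vector computations for general rank-one operators.

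\paragraph{A repair within your route.}
Your argument can be fixed by using only generic $a$. For $\dim H\ge3$, take $a=(x\pm y)+z$ with $0\ne z\in M^\perp$. These vectors still span the hyperplanes $\{x\mp y\}^\perp$, and $\alpha=\langle x,\hat a\rangle$ satisfies $0<|\alpha|<1/\sqrt2$. Write $x=\bar\alpha\hat a+x'$ with $x'\perp a$, and put $c=|\alpha|/\|x'\|<1$.

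The unit vectors $b=\beta\hat x'+b''$ with $|\beta|=c$ and $b''\in\{a,x'\}^\perp$ satisfy your nilpotent condition. Replacing $b''$ by $-b''$ shows that they span $a^\perp$.

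Next take $s>0$ small and choose $b''$ with $\|b''\|^2=1-s^2-c^2(1+s)^2$. Then the unit vector $b=s\hat a-\alpha(1+s)\|x'\|^{-1}\hat x'+b''$ solves your phase equation in step~3 with $\mu=1$. It supplies the direction $a$.

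With this change, the argument closes when combined with your $M^\perp$ block. That block is genuinely needed, since spans are algebraic and $T-P_{M^\perp}TP_{M^\perp}$ has finite rank.

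Your $2\times2$ list also works: $(x+y)\otimes(x-y)$, $(x-y)\otimes(x+y)$, $P_y$ and $y\otimes x$ span $M_2(\C)$. However, the relations $R\sorth P_y$, $R\sorth y\otimes x$ and $y\otimes x\sorth R$ require Lemma~\ref{lem:nilpotent} with $\lambda=0$. Your criteria (a) and (b) do not give them.
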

\begin{proof}
If $R^2\ne0$, Lemma~\ref{lem:rankone} gives a unit vector $m$ such that
\[
 \mathcal N(R)\subseteq\{B:B^*m=0\}.
\]
The space on the right is a proper complex linear subspace of $\BH$.

If $R^2=0$, unitary conjugation and scalar homogeneity
reduce matters to $N=e_1\otimes e_2$ for orthonormal $e_1,e_2$.
Put $L=e_1^\perp$ and $P=P_L$.
If $D=PDP$, then $D^*e_1=0$, so for every $X\in\BH$,
\[
 w(N+DX)\ge\tfrac12\|(N+DX)^*e_1\|
 =\tfrac12=w(N).
\]
Thus $N\sorth D$. Also,
\[
 P(D+NX)P=D\qquad(X\in\BH),
\]
and compression gives $D\sorth N$.
For $x\in L$, put $S=x\otimes e_1$. Since $S^*e_1=0$, the same
estimate gives $N\sorth S$. Moreover,
\[
 w(S+NX)\ge\tfrac12\|P(S+NX)\|
 =\tfrac12\|x\|=w(S)\qquad(X\in\BH),
\]
so $S\sorth N$. Hence $x\otimes e_1\in\mathcal N(N)$.
It follows that
\begin{equation}\label{eq:range-hyperplane-span}
 \{T\in\BH:\ran T\subseteq L\}\subseteq\mathcal V(N),
\end{equation}
because every such $T$ is the sum of $PTP$ and $(Te_1)\otimes e_1$.

Since
\[
 T=PT+e_1\otimes(T^*e_1)\qquad(T\in\BH),
\]
it remains to prove that $e_1\otimes y\in\mathcal V(N)$ for every
$y\in H$. If $F\in\mathcal N(N)$, then
$PF\in\mathcal V(N)$ by~\eqref{eq:range-hyperplane-span}, and hence
\[
 P_{e_1}F=F-PF\in\mathcal V(N).
\]
Consider
\[
 F_0=(e_1+e_2)\otimes(e_1-e_2),\qquad
 F_1=(e_1+ie_2)\otimes(e_1-ie_2).
\]
Both are rank-one nilpotents of numerical radius $1$.
By Lemma~\ref{lem:witness}, the vector $e_2$ witnesses $F_j\sorth N$.
The vectors $(e_1-e_2)/\sqrt2$ and
$(e_1-ie_2)/\sqrt2$ witness $N\sorth F_0$ and $N\sorth F_1$,
respectively.
We have
\[
 P_{e_1}F_0=e_1\otimes(e_1-e_2),\qquad
 P_{e_1}F_1=e_1\otimes(e_1-ie_2),
\]
so taking linear combinations gives
$e_1\otimes e_1,e_1\otimes e_2\in\mathcal V(N)$.

If $z\perp e_1,e_2$ is a unit vector, put
\[
 F_z=(e_1+e_2+\sqrt2z)\otimes(e_1+e_2-\sqrt2z).
\]
It has numerical radius $2$. The vector $z$ witnesses $F_z\sorth N$,
and $(e_1-e_2)/\sqrt2$ witnesses $N\sorth F_z$.
Since $P_{e_1}F_z=e_1\otimes(e_1+e_2-\sqrt2z)\in\mathcal V(N)$,
we obtain $e_1\otimes z\in\mathcal V(N)$.
Thus $e_1\otimes y\in\mathcal V(N)$ for every $y\in H$, and
$\mathcal V(N)=\BH$ follows.
\end{proof}

\begin{lemma}\label{lem:idempotent}
Let $Q\in\BH$ be a rank-one idempotent. If $Q\sorth(I-Q)$, then $Q$
is an orthogonal projection.
\end{lemma}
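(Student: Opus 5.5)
Write $Q=u\otimes v$ with $\langle v,u\rangle=1$; this is what $Q^2=Q$ means for a rank-one operator. The conclusion $Q=Q^*$ is equivalent to $v$ being parallel to $u$, and then $\|u\|\|v\|=1$. Since $\langle v,u\rangle\ne0$, Lemma~\ref{lem:rankone} applies: $Q$ has a unique maximizing ray $\C m$, and
\[
Q\sorth B\iff B^*m=0.
\]
The plan is to apply this with $B=I-Q$. The hypothesis then gives $(I-Q)^*m=0$, that is,
\[
m=Q^*m=v\langle u,m\rangle .
\]
So $m$ is a nonzero multiple of $v$; the coefficient is nonzero because $m\neq0$.

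Next I identify $m$ explicitly. Normalize $u=ap$ and $v=bq$ with unit vectors $p,q$ and $a,b>0$, adjusting phases so that $\rho=\langle q,p\rangle\ge0$. Then $\langle v,u\rangle=ab\rho=1$, so $\rho>0$. By the proof of Lemma~\ref{lem:rankone}, the maximizing ray is spanned by $p+q$. The conclusion of the previous paragraph says $p+q\in\C q$, hence $p\in\C q$. Both are unit vectors with $\langle q,p\rangle=\rho>0$, so $p=q$ and $\rho=1$.

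Consequently $Q=ab\,p\otimes p$ with $ab=1/\rho=1$, so $Q=P_p$ is an orthogonal projection.

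The only delicate point is using the explicit form of the maximizing vector. Lemma~\ref{lem:rankone} states only uniqueness of the ray, but its proof records that $m\propto p+q$, and the argument depends on this. If one prefers not to rely on that formula, there is an alternative. From $m\in\C v$, a direct comparison in the rank-one radius formula~\eqref{eq:rankone-radius} gives
\[
|\langle q,Qq\rangle|=|\langle q,u\rangle|\,\|v\|=ab\rho=1,
\]
whereas $w(Q)=\tfrac{ab(1+\rho)}{2}=\tfrac{1+\rho}{2\rho}$. Since $q$ is maximizing, these are equal, which forces $\rho=1$.
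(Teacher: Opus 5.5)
Your argument is correct, but it takes a different route from the paper.

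You apply the necessity half of the kernel criterion \eqref{eq:rankone-kernel} with $B=I-Q$. This places the maximizing ray of $Q$ in $\ker(I-Q)^*=\ran Q^*=\C v$. You then force $\rho=1$ either from the explicit maximizing vector $p+q$, or, in your alternative, by comparing $|\langle q,Qq\rangle|=1$ with $w(Q)=(1+\rho)/(2\rho)$.

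The paper never uses \eqref{eq:rankone-kernel}. It tests the defining inequality only at $X=I$, which gives $w(Q)\le w(I)=1$. The radius formula \eqref{eq:rankone-radius} and Cauchy--Schwarz, $\|u\|\|v\|\ge|\langle v,u\rangle|=1$, then give $w(Q)=(\|u\|\|v\|+1)/2\ge1$. So equality holds, and $u,v$ are parallel.

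The paper's route is shorter. It avoids both the perturbation argument behind the necessity in \eqref{eq:rankone-kernel} and the explicit form of $m$. It also proves more: every rank-one idempotent with $w(Q)\le1$ is an orthogonal projection, so a single multiplier $X$ suffices.

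Your route uses the full strength of $Q\sorth(I-Q)$. In return it gives structural information: the maximizing ray of $Q$ must lie in $\ran Q^*$. Your final comparison, $w(Q)=(1+\rho)/(2\rho)\ge1$ with equality if and only if $\rho=1$, is the paper's inequality in disguise, because $ab=1/\rho=\|u\|\|v\|$.

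One small point: no phase adjustment is needed in your normalization, since $ab\rho=1$ already forces $\rho=1/(ab)>0$.
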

\begin{proof}
Taking $X=I$ in the defining inequality gives $w(Q)\le1$.
Write $Q=u\otimes v$ with $\langle v,u\rangle=1$. By
\eqref{eq:rankone-radius},
\[
 1\ge w(Q)=\frac{\|u\|\|v\|+1}{2}\ge1.
\]
Equality in the Cauchy--Schwarz inequality forces $u,v$ to be parallel,
so $Q$ is an orthogonal projection.
\end{proof}

\section{The finite-dimensional case of Theorem~\texorpdfstring{\ref{thm:full}}{\getrefnumber{thm:full}}}\label{sec:finite}

Throughout this section, $H$ is finite dimensional. Fix an orthonormal
basis and identify $\BH$ with $M_n(\C)$, where $n=\dim H\ge2$.
Under this identification, the representation~\eqref{eq:full-standard}
is equivalent to
\begin{equation}\label{eq:finite-standard}
 \Phi(A)=cVAV^*\quad\text{or}\quad
 \Phi(A)=cV\overline A\,V^*\qquad(A\in M_n(\C)),
\end{equation}
where $V\in M_n(\C)$ is unitary and $\overline A$ denotes entrywise
conjugation. We first prove that every nonzero additive preserver in
this setting is automatically bijective.

\begin{lemma}\label{lem:finite-real-linearity}
Let $\mathcal X$ be either $M_n(\C)$ or $\mathcal B_s(\C^n)$.
Every additive map $\Phi:\mathcal X\to M_n(\C)$ preserving $\morth$
in one direction is continuous and real linear.
\end{lemma}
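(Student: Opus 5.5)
Since $\Phi$ is additive, it is $\mathbb Q$-linear. Real linearity will therefore follow once $\Phi$ is continuous, because a continuous $\mathbb Q$-linear map between finite-dimensional real spaces is real linear. As $\mathcal X$ is finite dimensional, continuity of an additive map is equivalent to boundedness on a neighborhood of $0$, or even on a set of positive measure or nonmeagre set; this is the classical Banach--Ostrowski--Mehdi type result. Our aim is to show that $\Phi$ is bounded on some nonempty open set. The preservation property is scale invariant in each variable, so norms cannot be controlled directly. Instead we will show that for every nonzero $T\in\mathcal X$ the additive map $t\mapsto\Phi(tT)$ on $\mathbb R$ is real linear. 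By additivity, this gives real linearity of $\Phi$ on $\mathcal X$, and continuity follows because a real linear map on a finite-dimensional space is continuous.

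To prove linearity of $t\mapsto\Phi(tT)$, we first work with operators whose mutual neighborhoods are rich. For a rank-one orthogonal projection $P=P_x$, or for an operator supported on a proper subspace, there are many $B$ with orthogonal support, hence $P\morth B$. Fix such $A$ and $B$ with orthogonal supports. For rational $q$ and real $t$, the pair $tA$ and $qB$ is mutually orthogonal, so $\Phi(tA)\morth q\Phi(B)$. By additivity, $\Phi(tA)-q'\Phi(A)$ is the image of $(t-q')A$, which is still orthogonal to $B$. The key step is the following. Suppose $\Phi(tA)\ne t\Phi(A)$. Take rationals $q_k\to t$ and set $D_k=\Phi((t-q_k)A)=\Phi(tA)-q_k\Phi(A)$. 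Each $D_k$ satisfies $D_k\morth\Phi(B)$ for all $B$ supported on $A$'s complement. Moreover, $D_k\to\Phi(tA)-t\Phi(A)=:E\neq 0$. We would then use self-orthogonality~\eqref{eq:self-orthogonality} combined with sums. Note that $(t-q_k)A+sC\morth$ suitable test operators, for $C$ supported in the same region. The goal is to produce two elements $X,Y$ with $X\morth Y$ and images $\Phi(X)=\Phi(Y)+$(small), and then derive a contradiction from a limiting form of $\Phi(X)\sorth\Phi(X)$.

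A cleaner route, which I would try first, is based on closedness. The relation $\sorth$ is closed under limits in the first argument. Indeed, $w(A_k+BX)\ge w(A_k)$ passes to the limit, and continuity of $w$ gives $w(A+BX)\ge w(A)$. The second argument is handled in the same way for fixed $X$ after rescaling. Now $\Phi(tA)=E+t\Phi(A)$. Since $\Phi(A)\morth\Phi(A')$ for all $A'$ orthogonally supported, the limit $E$ also satisfies $E\morth\Phi(B)$ for all rational combinations. We then use the image of a unitary-conjugate family: choose $A=P_x$, $B=P_y$ with $x\perp y$. The pair $(P_x+P_y)$ and $(P_x-P_y)$ relations, together with $\Phi(sP_x)-\Phi(sP_y)$ for irrational $s$, should force $E\sorth E$ via suitable rational approximations. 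By~\eqref{eq:self-orthogonality} this gives $E=0$. This shows linearity on multiples of rank-one projections. Since every element of $\mathcal X$ is a real (in the self-adjoint case) or complex combination of finitely many such projections and rank-one nilpotents, it would then follow in general. For the complex case we would also need $\Phi(tiP)$, which is handled identically.

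The main obstacle is this last step: genuinely producing a contradiction from $\Phi(tA)\neq t\Phi(A)$ using only one-directional preservation. I would attempt it by exploiting that $\mathcal N(P_x)$ contains all operators supported on $x^\perp$. In dimension $n\ge2$ this set is large, and its image under a discontinuous additive map would be dense. Hence the closed set $\{C:\Phi(P_x)\morth C\}$ would contain a dense subset of a real subspace of dimension at least $2$. Closedness would then make it contain the whole real span, including $\Phi(P_x)$ itself, contradicting~\eqref{eq:self-orthogonality}. The point to check is that the graph closure of a discontinuous additive map restricted to a subspace contains a nontrivial line through the origin, and that scaling makes it contain a multiple of $\Phi(P_x)$.
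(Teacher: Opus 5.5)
Your proposal does not prove the lemma. The one step that carries its content, ruling out a discontinuous additive function $t\mapsto\Phi(tP)$, is left open, as you acknowledge. What you do establish is correct but insufficient. The relation $\sorth$ is jointly closed, so $E=\Phi(tA)-t\Phi(A)$ satisfies $E\morth\Phi(B)$ for every $B$ supported orthogonally to $A$, but nothing you write produces $E\sorth E$.

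The density argument in your last paragraph breaks at the claim that the closed real span $W$ of $\Phi(V)$ contains $\Phi(P_x)$, where $V$ is the set of operators supported on $x^\perp$. Closedness and self-orthogonality ($A\sorth A$ only for $A=0$) show only that $W$ meets the closure of $\Phi(\R P_x)$ in $\{0\}$. To reach a contradiction you would have to show that discontinuity forces these two subspaces to meet. Nothing in your argument connects the fibre of the graph closure of $\Phi|_V$ (which lies in $W$) with $\Phi(P_x)$, and scaling cannot help because $W$ is already a real subspace. Moreover, if the discontinuity occurred only along $\R P_x$, then $\Phi|_V$ could be linear and $W=\Phi(V)$, so the argument would yield nothing at all.

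The missing idea is that norms can be controlled directly, contrary to your remark. If $A\morth B$, then testing $\Phi(A)\sorth\Phi(B)$ with $X=\pm I$ and using additivity gives $w(\Phi(A))\le w(\Phi(A\pm B))$. The paper applies this three times:
\begin{itemize}
\item[(i)] From $P\morth I-P$ it obtains $w(\Phi(P))\le w(\Phi(I))$, uniformly over all rank-one orthogonal projections $P$.
\item[(ii)] For orthonormal $x,y$ and $|t|\le1$, it writes $t(P_x-P_y)=P_{ax+by}-P_{bx+ay}$ with $a=\sqrt{(1+t)/2}$ and $b=\sqrt{(1-t)/2}$. Subadditivity and (i) then give $w(\Phi(t(P_x-P_y)))\le2w(\Phi(I))$.
\item[(iii)] From $tP_x\morth tP_y$ it gets $w(\Phi(tP_x))\le w(\Phi(t(P_x-P_y)))$.
\end{itemize}
Hence the additive function $t\mapsto\Phi(tP_x)$ is bounded on $[-1,1]$, and therefore real linear. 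With that bound in hand, your reduction to such lines and your final assembly go through as you describe. On $\mathcal B_s(\C^n)$ this uses spectral decomposition; on $M_n(\C)$ it uses the same argument applied to $A\mapsto\Phi(iA)$. Continuity then follows from finite dimensionality.
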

\begin{proof}
First suppose that $\mathcal X=\mathcal B_s(\C^n)$, and put
$q(A)=w(\Phi(A))$. For every rank-one orthogonal projection $P$,
the relation $P\morth I-P$ gives $q(P)\le q(I)$.
Fix orthonormal vectors $x,y$ and put $P=P_x$, $Q=P_y$.
For $|t|\le1$, let $a=\sqrt{(1+t)/2}$ and $b=\sqrt{(1-t)/2}$.
Since
\[
 t(P-Q)=P_{ax+by}-P_{bx+ay},
\]
subadditivity gives $q(t(P-Q))\le2q(I)$.
Also $tP\morth tQ$, so
\[
 q(tP)\le q(t(P-Q))\le2q(I)\qquad(|t|\le1).
\]
Thus the additive function $t\mapsto\Phi(tP)$ is bounded near zero,
and hence is continuous and real linear. Spectral decomposition now
shows that $\Phi$ is real linear on $\mathcal B_s(\C^n)$.

For a map defined on $M_n(\C)$, apply the preceding argument to
the maps $A\mapsto\Phi(A)$ and $A\mapsto\Phi(iA)$ on
$\mathcal B_s(\C^n)$. Both preserve $\morth$ in one direction.
Writing each matrix as $A=G+iL$ with $G,L$ self-adjoint proves real
linearity on $M_n(\C)$. Continuity follows from finite dimensionality.
\end{proof}

The following alternative for neighborhood spans will be used both to
prove preservation of singularity and to establish injectivity.

\begin{lemma}\label{lem:hyperplane-span}
Let $0\ne A\in M_n(\C)$ be singular, and let $m$ be a unit vector
such that $|\langle m,Am\rangle|=w(A)$. Put
\[
 \mathcal H_m=\{B\in M_n(\C):B^*m=0\}.
\]
Then
\[
 A\in\mathcal V(A)\qquad\text{or}\qquad
 \mathcal H_m\subseteq\mathcal V(A).
\]
\end{lemma}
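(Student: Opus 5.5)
The plan is to fill $\mathcal V(A)$ with operators whose membership in $\mathcal N(A)$ is certified by Lemma~\ref{lem:witness} in both directions. Since $n<\infty$ and $A$ is singular, $A^*$ is singular, so $K=\ker A^*\neq\{0\}$. For $B\in\BH$ we have $A\sorth B$ whenever $B^*m=0$, that is, whenever $B\in\mathcal H_m$. We also have $B\sorth A$ whenever $B$ has a numerical-radius maximizing vector $k\in K$. Thus
\[
 \{B\in\mathcal H_m:\ B \text{ has a maximizing vector in } K\}\subseteq\mathcal N(A),
\]
and the proof reduces to deciding when this set spans $\mathcal H_m$. By Lemma~\ref{lem:angle}, every unit $k\in K$ satisfies $|\langle m,k\rangle|\le1/\sqrt2$, so $k$ is never parallel to $m$. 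I would split according to whether $K\cap m^\perp\neq\{0\}$.

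\emph{Case 1: there is a unit vector $k\in K$ with $k\perp m$.} Then $P_k\in\mathcal H_m$ and $k$ maximizes it. For an arbitrary $B_0\in\mathcal H_m$, I would try to write $B_0=(B_0+sP_k)-sP_k$, or a short sum of such differences, with each summand in $\mathcal H_m$ having $k$ as a maximizing vector. This step is the main obstacle, because adding $sP_k$ does not in general make $k$ exactly maximizing. My first attempt is to decompose $B_0$ relative to $H=\C k\oplus k^\perp$, keeping ranges inside $m^\perp$, into three pieces:
\begin{itemize}
\item[(i)] the part $P_{k^\perp}B_0P_{k^\perp}$;
\item[(ii)] the column part $P_kB_0$ together with the row part $B_0P_k$;
\item[(iii)] the corner entry.
\end{itemize}
For (i), $B'\oplus sP_k$ has $k$ as a maximizing vector as soon as $s\ge w(B')$, by the direct-sum identity in \eqref{eq:basic-radius}. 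For (ii), I would use rank-one operators $x\otimes y$ with $x\in m^\perp$. Where possible I would choose them to be rank-one nilpotents built on $k$ and a vector of $m^\perp$, whose maximizing vectors can be computed explicitly from Lemma~\ref{lem:rankone}, and I would adjust the phases so that a maximizing vector lies in $K$. Linear combinations of these pieces then give $\mathcal H_m\subseteq\mathcal V(A)$.

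\emph{Case 2: $K\cap m^\perp=\{0\}$.} Then $\dim K=1$, since otherwise $K$ would meet the hyperplane $m^\perp$. Write $K=\C k$ with $0<|\langle m,k\rangle|\le1/\sqrt2$. Here I would aim for the first alternative, $A\in\mathcal V(A)$, by expressing $A$ as a sum of elements of $\mathcal N(A)$. Candidates are $A_1=A-\alpha P_k$-type corrections, or compressions $P_LAP_L$ to subspaces $L$ orthogonal to suitable vectors. For each candidate I would verify the two orthogonality relations with Lemma~\ref{lem:witness}, using $m$ or $k$ as witnesses. When $|\langle m,k\rangle|=1/\sqrt2$, the equality case of Lemma~\ref{lem:angle} gives $A=R\oplus D$ with $R$ a rank-one nilpotent. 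Then $R$ and $D$ have orthogonal supports, and Lemma~\ref{lem:span} (the case $R^2=0$) together with the orthogonal-support remark should put both summands, and hence $A$, into $\mathcal V(A)$.

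I expect the exact-maximizer requirement in Case 1, part (ii), to be the delicate point. If it fails, my fallback is to replace exact witnesses with approximate ones via the sequential form of Lemma~\ref{lem:witness}.
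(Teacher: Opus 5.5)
There is a genuine gap. Your only certificate for $B\sorth A$ is Lemma~\ref{lem:witness}, i.e.\ a maximizing vector of $B$ lying in $K=\ker A^*$, and this certificate cannot span $\mathcal H_m$ in general.

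Suppose $B\in\mathcal H_m$ and a unit $k\in K$ satisfies $\langle k,Bk\rangle=e^{i\phi}w(B)$. As in the proof of Lemma~\ref{lem:angle}, $e^{-i\phi}Bk+e^{i\phi}B^*k=2w(B)k$. Pair this with $m$ and use $\langle m,Bk\rangle=\langle B^*m,k\rangle=0$; this gives
\[
 \langle k,Bm\rangle=2\langle k,m\rangle\langle k,Bk\rangle .
\]
When $K=\C k$, this is a fixed nonzero linear condition on $\mathcal H_m$. That situation covers all of your Case~2 and many instances of Case~1. So everything you certify lies in a proper subspace of $\mathcal H_m$. For $k\perp m$ the condition reads $\langle k,Bm\rangle=0$, which excludes exactly $k\otimes m$. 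Concretely, for $n=2$ and $A=P_m$, your certified set is $\C P_k$, whereas $\mathcal V(A)=\mathcal H_m=\Span\{P_k,k\otimes m\}$.

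Your fallback does not help. In finite dimension, approximate witnesses have a convergent subsequence, and its limit is an exact maximizing vector in $K$.

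The missing ingredient is Lemma~\ref{lem:nilpotent}, which certifies $N\sorth A$ for rank-one nilpotents without any witness:
\begin{itemize}
\item $k\otimes m\sorth A$ because $A^*k=0$;
\item when $\langle m,k\rangle\ne0$, $u\otimes(k-\overline\alpha u)\sorth A$ for unit $u\perp m$ with $\alpha=\langle k,u\rangle$ and $|\alpha|>1/\sqrt2$.
\end{itemize}
These are precisely directions that violate the displayed identity. The paper combines them with witnessed operators $u\otimes(2\alpha k-u)$, which do fit your framework.

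Second, your Case~2 target $A\in\mathcal V(A)$ is false outside the equality case. Since $\langle m,A^*m\rangle=\overline{\langle m,Am\rangle}\ne0$, we always have $A\notin\mathcal H_m$. Take $n=2$ and $A=u\otimes v$ with unit $u,v$ and $0<\langle v,u\rangle<1$. Then $K=u^\perp$ is a line not orthogonal to $m\propto u+v$, so you are in Case~2 with $|\langle m,k\rangle|<1/\sqrt2$. But Lemma~\ref{lem:rankone} gives $\mathcal N(A)\subseteq\mathcal H_m$, hence $A\notin\mathcal V(A)$.

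The case split must therefore be by the angle, as in the paper. Equality in Lemma~\ref{lem:angle} leads to $A\in\mathcal V(A)$. Strict inequality must lead to $\mathcal H_m\subseteq\mathcal V(A)$, which again needs the nilpotent lemma.

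Even in the equality case, orthogonal supports do not put $0\oplus D$ into $\mathcal N(A)$. For $D\ne0$, the choice $X=0\oplus(-I)$ shows $0\oplus D\not\sorth A$. One fix is to note that $R^*\oplus D$ and $R^*\oplus0$ both lie in $\mathcal N(A)$ and take their difference. You also need the compression check that $B\morth R$ on $M$ implies $B\oplus0\morth A$, so that Lemma~\ref{lem:span} transfers from $\mathcal B(M)$ to $\mathcal V(A)$.
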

\begin{proof}
Choose a unit vector $k\in\ker A^*$. By Lemma~\ref{lem:angle},
$|\langle m,k\rangle|\le1/\sqrt2$.
If equality holds, that lemma gives an orthogonal decomposition
\[
 A=R\oplus D,\qquad \rank R=1,\quad R^2=0,\quad
 w(D)\le w(R)=w(A),
\]
where $R$ acts on a two-dimensional subspace $M$.
If $B\in\mathcal B(M)$ satisfies $B\morth R$, then
$B\oplus0\morth A$ by compression to $M$.
Lemma~\ref{lem:span}, applied on $M$, therefore gives
$R\oplus0\in\mathcal V(A)$.
Since $R\morth R^*$ and
$w(R^*\oplus D)=w(R^*\oplus0)=w(R)$, the same compression argument
shows that $R^*\oplus D$ and $R^*\oplus0$ belong to $\mathcal N(A)$.
Their difference is $0\oplus D$, so $A\in\mathcal V(A)$.

Suppose now that $|\langle m,k\rangle|<1/\sqrt2$.
Put $L=m^\perp$, and let $\mathcal M(L)=P_LM_n(\C)P_L$
be the matrices supported on $L$.
Since $\mathcal H_m$ consists of the matrices with range in $L$,
for any $w\notin L$ we have
\[
 \mathcal H_m=P_LM_n(\C)
 =\mathcal M(L)+\{x\otimes w:x\in L\}.
\]
It is therefore enough to show that both summands are contained in
$\mathcal V(A)$, taking $w=k$ when $\langle m,k\rangle\ne0$
and $w=m$ otherwise.

First suppose that $\langle m,k\rangle\ne0$. The set
\[
 \Omega=\{u\in L:\|u\|=1,\ |\langle k,u\rangle|>1/\sqrt2\}
\]
is nonempty and relatively open in the unit sphere of $L$, since
$\|P_Lk\|>1/\sqrt2$.
For $u\in\Omega$, put
\[
 \alpha=\langle k,u\rangle,\qquad v=2\alpha k-u,\qquad
 z=k-\overline\alpha u,\qquad R=u\otimes v,\qquad N=u\otimes z.
\]
Here $1/\sqrt2<|\alpha|<1$, $\|v\|=1$, and
\[
 \langle v,u\rangle=2|\alpha|^2-1>0,
 \qquad \langle k,Rk\rangle=|\alpha|^2=w(R).
\]
Consequently $R\sorth A$ by Lemma~\ref{lem:witness}, while
$A\sorth R$ follows from $R^*m=0$.

Also $z\perp u$ and $\delta:=\|z\|=\sqrt{1-|\alpha|^2}>0$.
Since $A^*k=0$, we have
\[
 A^*u=-\frac{\delta}{\overline\alpha}A^*(z/\delta),
 \qquad \frac{\delta}{|\alpha|}<1.
\]
Lemma~\ref{lem:nilpotent} gives $N\sorth A$, and $N^*m=0$ gives
$A\sorth N$. Thus $R,N\in\mathcal N(A)$.
The identities
\[
 R=2\overline\alpha\,(u\otimes k)-u\otimes u,\qquad
 N=u\otimes k-\alpha(u\otimes u)
\]
form a linear system for $u\otimes k$ and $u\otimes u$ with
nonzero determinant $1-2|\alpha|^2$. Hence
\[
 u\otimes k,\ u\otimes u\in\mathcal V(A)\qquad(u\in\Omega).
\]
The vectors in $\Omega$ span $L$, and their projections $u\otimes u$
span $\mathcal M(L)$. Indeed, if a linear functional $f$ on
$\mathcal M(L)$ vanishes on these projections, then
$u\mapsto f(u\otimes u)$ is a polynomial in real coordinates that,
by homogeneity, vanishes on a nonempty open cone in $L$.
It therefore vanishes on all of $L$, and polarization gives $f=0$.
The decomposition with $w=k$ now gives
$\mathcal H_m\subseteq\mathcal V(A)$.

Now suppose that $\langle m,k\rangle=0$, so $k\in L$.
Let $J=2P_k-I_L$ on $L$, extended by zero on $L^\perp$.
For every unit $u\in L$ with $|\langle k,u\rangle|>1/\sqrt2$, put
$\alpha=\langle k,u\rangle$. Then
$R_u=P_uJ=u\otimes(2\alpha k-u)$ belongs to $\mathcal N(A)$
by the same calculation as for $R$ above.
Since the projections $P_u$ span $\mathcal M(L)$ and $J|_L$ is
invertible, we obtain $\mathcal M(L)\subseteq\mathcal V(A)$.

Since $k\perp m$ and $A^*k=0$,
Lemmas~\ref{lem:witness} and~\ref{lem:nilpotent} give
$k\otimes m\morth A$.
For $x\perp m,k$, the matrix $D=x\otimes m$ is supported on $k^\perp$.
Choose $\varepsilon>0$ such that $\varepsilon w(D)\le1$ and set
$B=P_k+\varepsilon D$. Relative to $\C^n=\C k\oplus k^\perp$,
$B=1\oplus\varepsilon D|_{k^\perp}$. Thus $w(B)=1$, attained at $k$,
and $B^*m=0$. Since $A^*k=0$, Lemma~\ref{lem:witness}, applied at
$k$ and $m$, gives $B\morth A$.
As $P_k\morth A$ as well, we obtain
$x\otimes m\in\mathcal V(A)$.
Together with $k\otimes m$, these matrices give all
$x\otimes m$ with $x\in L$.
The decomposition with $w=m$ therefore gives
$\mathcal H_m\subseteq\mathcal V(A)$.
\end{proof}

\begin{lemma}\label{lem:singularity}
Let $\Phi:M_n(\C)\to M_n(\C)$ be a nonzero additive map preserving
$\morth$ in one direction. Then $\ker\Phi$ contains no nonzero
singular matrix, and $\Phi$ maps singular matrices to singular matrices.
\end{lemma}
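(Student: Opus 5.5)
We argue by contradiction using Lemma~\ref{lem:hyperplane-span}. By Lemma~\ref{lem:finite-real-linearity}, $\Phi$ is real linear, so $\ker\Phi$ is a real subspace. The key observation is that $\Phi$ maps the neighborhood of $A$ into the neighborhood of $\Phi(A)$. Hence it maps $\mathcal V(A)$, the real span of $\mathcal N(A)$, into the span of $\mathcal N(\Phi(A))$. If $\Phi(A)=0$, then $\mathcal N(\Phi(A))=\mathcal N(0)$. Since $0\morth B$ holds for every $B$ (indeed $w(0+BX)\ge0$ and $w(B+0)\ge w(B)$), this observation alone gives no information. The contradiction must therefore use the self-orthogonality criterion~\eqref{eq:self-orthogonality}, applied to images.

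Suppose $0\ne A$ is singular with $\Phi(A)=0$, and let $m$ be a maximizing vector for $A$. First I will show that $A$ itself cannot lie in $\mathcal V(A)$ with harmless consequences: Lemma~\ref{lem:hyperplane-span} gives either $A\in\mathcal V(A)$ or $\mathcal H_m\subseteq\mathcal V(A)$.

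\emph{Case $\mathcal H_m\subseteq\mathcal V(A)$.} Then $\mathcal H_m$ contains matrices of every rank up to $n-1$, including invertible-on-$m^\perp$ ones. For $B\in\mathcal N(A)$ and any $C$ with $C\morth B$, the triple relations let me propagate. Concretely, I plan to show that $\ker\Phi\supseteq\mathcal N(A)$ fails: I would instead find $B,B'\in\mathcal N(A)$ with $\Phi(B)$ and $\Phi(B')$ forced to be orthogonal, and then use additivity, $\Phi(A+B)=\Phi(B)$. The cleaner route is as follows. If $\Phi(A)=0$, then for every $B$ with $A+B\morth C$ we get $\Phi(B)\morth\Phi(C)$. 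Choosing $B=-A+P$ for suitable $P$ turns this into a statement about translates. I will therefore aim to show that the set $\mathcal K=\ker\Phi$, which is a real subspace closed under this mutual-neighborhood spanning, must contain $\mathcal V(A)$. The idea is to iterate with rank-one elements of $\mathcal H_m$: each singular $S\in\mathcal H_m\cap\ker\Phi$ again yields $\mathcal V(S)\subseteq\ker\Phi$ or $S\in\mathcal V(S)$. The goal is to enlarge the kernel until it contains $\mathcal V(N)=M_n(\C)$ for a rank-one nilpotent $N$ (Lemma~\ref{lem:span}). That would give $\Phi=0$, a contradiction.

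\emph{Case $A\in\mathcal V(A)$.} To transfer kernel membership to $\mathcal N(A)$, I would use the following step. If $\Phi(A)=0$ and $A\morth B$, compare $\Phi(B)$ with $\Phi(B+tA)=\Phi(B)$ and use that $B+tA$ ranges over many operators. For large $t$, $B+tA$ is essentially $A$, so the neighborhoods of $B+tA$ relate to those of $A$. This should give $\Phi(B)\morth\Phi(C)$ for all $C$ in a large set, in particular $C=B$ after perturbation. Then~\eqref{eq:self-orthogonality} forces $\Phi(B)=0$. I expect this step to be the main obstacle: it requires showing that the preserved relation at the translate lets one choose $C$ with $\Phi(C)=\Phi(B)$, i.e.\ $C-B\in\ker\Phi$ and $C\morth B+tA$. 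Given this, $\mathcal N(A)\subseteq\ker\Phi$, so $\mathcal V(A)\subseteq\ker\Phi$. In either case we reach a rank-one nilpotent in the kernel, hence $\Phi=0$.

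For singularity preservation, suppose $A$ is singular but $\Phi(A)$ is invertible. An invertible $T$ satisfies $T\sorth B$ only under strong restrictions: with $X=-B^{-1}$-type choices, $B$ invertible gives $T+BX=0$. I expect $\mathcal N(T)$ to consist of singular matrices whose span lies in a proper subspace related to maximizing vectors. Thus $\Phi(\mathcal V(A))$ would lie in a span avoiding $\Phi(A)$, and also avoiding the image of $\mathcal H_m$, a subspace of dimension $n^2-n$. Since $\Phi$ is injective on singular matrices by the first part and is real linear, a dimension count against the image gives the contradiction.
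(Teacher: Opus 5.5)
\textbf{There is a genuine gap: the first assertion is not proved.} Both of your cases rest on a propagation principle: that $\Phi(S)=0$ for a nonzero singular $S$ forces $\Phi$ to vanish on $\mathcal N(S)$, and hence on $\mathcal V(S)$. As you note yourself, preservation yields nothing of this kind, since $\mathcal N(0)=M_n(\C)$.

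In the case $\mathcal H_m\subseteq\mathcal V(A)$ you give no mechanism for enlarging the kernel. The intended endpoint also fails for the same reason. A rank-one nilpotent $N\in\ker\Phi$ gives $\Phi=0$ via Lemma~\ref{lem:span} only if you already know $\mathcal V(N)\subseteq\ker\Phi$.

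In the case $A\in\mathcal V(A)$, note that $\Phi(A+sB)=s\Phi(B)$. Closedness of $\morth$ under limits would then give $\Phi(B)\morth\Phi(B)$, hence $\Phi(B)=0$, as soon as you have matrices $C_s\morth A+sB$ with $C_s\to B$ as $s\downarrow0$. Producing them is the whole difficulty. You give no construction, and for a general $B\in\mathcal N(A)$ there is no evident reason they exist.

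\textbf{The device that works.} Find a pair $C\morth D$ with $C+D$ or $C-D$ in $\ker\Phi$. Then $\Phi(C)=\mp\Phi(D)$, and \eqref{eq:self-orthogonality} kills both. The paper uses this in three concrete steps, none of which needs Lemma~\ref{lem:hyperplane-span}.

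(i) Suppose $N=x\otimes e\in\ker\Phi$. Take $u=(\eta e+\sqrt3x)/2$ and $v=(\eta e-\sqrt3x)/2$; then $u\otimes e\morth v\otimes e$ and their difference is $\sqrt3N$. This gives $\C P_e\subseteq\ker\Phi$, and then $w\otimes e\in\ker\Phi$ for all $w$. A maximization argument for $u\mapsto\Phi(u\otimes f)$ on $\Span_{\R}\{e,f\}$ then gives $e\otimes f\in\ker\Phi$ for every unit $f\perp e$. Repeating yields $\ker\Phi=M_n(\C)$.

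(ii) Suppose $0\ne A\in\ker\Phi$ is singular and $x\in\ker A$. The splitting $A=P_xA+(I-P_x)A(I-P_x)$ is a mutually orthogonal pair. So the rank-one nilpotent $P_xA$ lies in $\ker\Phi$, hence vanishes by (i), which forces $\ker A=\ker A^*$.

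(iii) In this reduced case, your translate idea is carried out, but only for the single matrix $N=x\otimes m$, with $m$ a maximizing vector in $(\ker A)^\perp$. The paper builds an explicit family $N_t=u_t\otimes m_t\morth A+tN$ with $N_t\to N$, using Lemma~\ref{lem:nilpotent}. This gives $\Phi(N)=0$, contradicting (i).

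\textbf{The second assertion.} You overlooked that an invertible $T$ has $\mathcal N(T)=\{0\}$: if $B\sorth T$, take $X=-T^{-1}B$. So no dimension count is needed:
\begin{itemize}
\item Lemma~\ref{lem:hyperplane-span} supplies some $0\ne B\in\mathcal N(A)$.
\item Such $B$ is singular, since otherwise $X=-B^{-1}A$ in $A\sorth B$ gives $A=0$.
\item Hence $\Phi(B)\ne0$ by the first part.
\item But $\Phi(B)\sorth\Phi(A)$ with $\Phi(A)$ invertible forces $\Phi(B)=0$.
\end{itemize}
Also, the first part does not make $\Phi$ injective on the set of singular matrices. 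It only gives injectivity on subspaces consisting of singular matrices, such as $\mathcal H_m$. In any case this step depends on the first assertion, which remains unproved in your proposal.
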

\begin{proof}
By Lemma~\ref{lem:finite-real-linearity}, $\Phi$ is continuous and
real linear. Put $K=\ker\Phi$.

\smallskip
\noindent\emph{Rank-one nilpotents.}
For unit vectors $u,v$ with $\langle u,v\rangle=-1/2$, put $r=u+v$.
Then $\|r\|=1$ and $\langle r,u\rangle=\langle r,v\rangle=1/2$.
By Lemma~\ref{lem:rankone}, the maximizing rays of $u\otimes r$ and
$v\otimes r$ are $\C(u+r)$ and $\C(v+r)$, respectively.
Since $\langle v,u+r\rangle=\langle u,v+r\rangle=0$, we obtain
\[
 u\otimes r\morth v\otimes r.
\]

Suppose that $K$ contains a nonzero rank-one nilpotent.
After dividing by its norm, write it as $N=x\otimes e$ with
$x,e$ orthonormal.
For $|\eta|=1$, put
\[
 r=\eta e,\qquad
 u=\frac{r+\sqrt3x}{2},\qquad
 v=\frac{r-\sqrt3x}{2}.
\]
These vectors satisfy the conditions above.
Since $u\otimes r=\overline\eta(u\otimes e)$, and similarly for $v$,
we also have $u\otimes e\morth v\otimes e$.
Moreover,
\[
 u\otimes e+v\otimes e=\eta P_e,\qquad
 u\otimes e-v\otimes e=\sqrt3N.
\]
Since $\sqrt3N\in K$, their images are equal and hence zero.
Taking their sum gives $\Phi(\eta P_e)=0$.
Varying $\eta$ and using real linearity yields $\C P_e\subseteq K$.
Now fix $\eta=1$ and replace $x$ by any unit vector $y\perp e$.
The sum of the resulting pair is $P_e\in K$, so its two images
are opposite and mutually strongly orthogonal, and therefore zero.
Their difference gives $y\otimes e\in K$.
Together with $\C P_e\subseteq K$, this proves
\[
 w\otimes e\in K\qquad(w\in\C^n).
\]

Fix a unit vector $f\perp e$, and put
$W=\Span_{\R}\{e,f\}$.
Consider the real-linear map
\[
 \Lambda:W\longrightarrow M_n(\C),\qquad \Lambda(u)=\Phi(u\otimes f).
\]
Choose a unit vector $u\in W$ such that
\[
 w(\Lambda(u))=M:=\max_{\substack{v\in W\\\|v\|=1}}w(\Lambda(v)).
\]
The two unit vectors $v\in W$ with $\langle u,v\rangle=-1/2$
give noncollinear vectors $u+v$.
We may therefore choose one for which
\[
 r=u+v=ae+bf,\qquad a,b\in\R,\quad b\ne0.
\]
Since $\Phi(w\otimes e)=0$ for every $w\in\C^n$, real linearity gives
\[
 \Phi(u\otimes r)=b\Lambda(u),\qquad
 \Phi(v\otimes r)=b\Lambda(v).
\]
Since $u\otimes r\morth v\otimes r$ and $b\ne0$,
preservation gives $\Lambda(u)\morth\Lambda(v)$.
Taking the right multiplier $I/2$ and using the definition of $M$,
we obtain
\[
 M\le w\bigl(\Lambda(u)+\tfrac12\Lambda(v)\bigr)
 =w\bigl(\Lambda(u+\tfrac12v)\bigr)
 \le\|u+\tfrac12v\|M=\frac{\sqrt3}{2}M.
\]
Thus $M=0$ and $\Lambda=0$, so $e\otimes f\in K$.
Applying the argument that yielded $w\otimes e\in K$ to this
rank-one nilpotent gives $w\otimes f\in K$ for every $w\in\C^n$.
Since the unit vector $f\perp e$ was arbitrary, complete $e$ to an
orthonormal basis $e_1=e,e_2,\ldots,e_n$.
Every matrix $A$ has the decomposition
\[
 A=\sum_{j=1}^n(Ae_j)\otimes e_j,
\]
whose summands all lie in $K$. Thus $\Phi=0$, a contradiction.

\smallskip
\noindent\emph{Singular matrices.}
Suppose that $0\ne A\in K$ is singular.
For a unit vector $x\in\ker A$, we have $AP_x=0$ and $A^*x\perp x$.
Write
\[
 A=N+B,\qquad N=P_xA=x\otimes(A^*x),\qquad
 B=(I-P_x)A(I-P_x).
\]
If $N\ne0$, it is a rank-one nilpotent.
Since $B$ is supported on $x^\perp$, compression gives $B\sorth N$.
For every $X\in M_n(\C)$,
\[
 2w(N+BX)\ge\|(N+BX)^*x\|=\|A^*x\|=2w(N),
\]
so $N\morth B$.
Their images sum to zero and are mutually strongly orthogonal;
hence both vanish, so $N\in K$.
By the first step, $N=0$, and therefore $A^*x=0$ for every $x\in\ker A$.
Equality of dimensions gives $\ker A=\ker A^*$.
The space $L=(\ker A)^\perp$ reduces $A$, and $A|_L$ is invertible.

Fix a unit vector $x\in\ker A$ and a unit maximizing vector
$m\in L$ for $A$.
Put $r=w(A)>0$, $N=x\otimes m$, and $A_t=A+tN$ for $t>0$.
We construct $N_t\morth A_t$ with $N_t\to N$.
For a unit vector $u=ax+z$, where $z\perp x$,
\[
 |\langle u,A_tu\rangle|
 =|\langle z,Az\rangle+t\overline a\,\langle m,z\rangle|
 \le r\|z\|^2+t|a|\|z\|.
\]
For fixed $|a|$ and $\|z\|$, equality is attained by taking
$z=bm$, $b\ge0$, and choosing the phase of $a$ so that the two
terms have the same argument.
We may therefore choose a unit maximizing vector
$m_t=a_tx+b_tm$, with $b_t\ge0$. Since
\[
 r=|\langle m,A_tm\rangle|
 \le w(A_t)=rb_t^2+t|a_t|b_t
 \le rb_t^2+t,
\]
we have $|a_t|^2=1-b_t^2\le t/r$.
Hence $a_t\to0$ and $b_t\to1$, so $m_t\to m$ as $t\downarrow0$.

Set
\[
 k_t=x-t(A^*|_L)^{-1}m.
\]
Then $A_t^*k_t=0$ and $k_t\to x$.
Let $u_t$ be the normalized projection of $k_t$ onto $m_t^\perp$.
For sufficiently small $t$ this is well defined and $u_t\to x$.
Writing
\[
 k_t=\alpha_tu_t+\beta_tm_t,
 \qquad \alpha_t\to1,\quad\beta_t\to0,
\]
we obtain
\[
 A_t^*u_t=-\frac{\beta_t}{\alpha_t}A_t^*m_t,
 \qquad \left|\frac{\beta_t}{\alpha_t}\right|\le1
\]
for sufficiently small $t$.
Put $N_t=u_t\otimes m_t$.
Lemma~\ref{lem:nilpotent} gives $N_t\sorth A_t$,
and $N_t^*m_t=0$ gives $A_t\sorth N_t$ by
Lemma~\ref{lem:witness}.
Thus $N_t\morth A_t$ and $N_t\to N$.
Since $\Phi(A_t)=t\Phi(N)$, preservation and scalar homogeneity
give $\Phi(N_t)\morth\Phi(N)$.
Continuity of $\Phi$ and $w$ then yields
$\Phi(N)\morth\Phi(N)$, so $\Phi(N)=0$.
This contradicts the exclusion of nonzero rank-one nilpotents from $K$.
Hence every nonzero element of $K$ is invertible.

\smallskip
\noindent\emph{Preservation of singularity.}
Let $A\ne0$ be singular, and choose a unit maximizing vector $m$ for $A$.
By Lemma~\ref{lem:hyperplane-span}, either $A\in\mathcal V(A)$ or
$\mathcal H_m\subseteq\mathcal V(A)$.
Since $A\ne0$ and $\mathcal H_m\ne\{0\}$, we have
$\mathcal V(A)\ne\{0\}$, so we may choose $0\ne B\in\mathcal N(A)$.
If $B$ were invertible, taking $X=-B^{-1}A$ in the defining inequality
for $A\sorth B$ would give $A=0$, a contradiction.
Thus $B$ is singular, and the preceding argument gives $\Phi(B)\ne0$.
Preservation yields $\Phi(B)\sorth\Phi(A)$.
If $\Phi(A)$ were invertible, taking $X=-\Phi(A)^{-1}\Phi(B)$ in
this inequality would give $\Phi(B)=0$, again a contradiction.
Hence $\Phi(A)$ is singular. The case $A=0$ is immediate.
\end{proof}

\begin{lemma}\label{lem:singular-range}
Let $T:M_n(\C)\to M_n(\C)$ be real linear and map singular
matrices to singular matrices. If $\ker T\ne\{0\}$, then
$\mathcal S=\Span_{\C}T(M_n(\C))$ consists of singular matrices, and
\[
 \dim_{\R}T(M_n(\C))\le2n(n-1).
\]
\end{lemma}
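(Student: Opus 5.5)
Let $K=\ker T\ne\{0\}$ and fix $0\ne K_0\in K$. The idea is to show that every element of the complex span of the image is singular by a perturbation argument, and then invoke the classical dimension bound for singular subspaces. First take any $A\in M_n(\C)$ and real $t$. By real linearity, $T(A+tK_0)=T(A)$. The image $T(M_n(\C))$ is a real subspace $\mathcal R$. We want to show that every element of $\mathcal R$ is singular, and then that every complex combination $T(A)+iT(B)$ is singular too.

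\emph{Step 1: each $T(A)$ is singular.} The set of singular matrices in the real affine line $\{A+tK_0:t\in\R\}$ is the zero set of the polynomial $p(t)=\det(A+tK_0)$. If $K_0$ is singular, then $p$ has a root whenever its leading behaviour allows, but this is not guaranteed in general. It is cleaner to use a complex line. Since $T$ is only real linear, consider instead the real two-parameter family $A+(s+it')K_0$, using $iK_0$: we have $T(iK_0)=?$, which need not vanish. So the argument must be modified as follows. Pick $0\ne K_0\in K$. Suppose $K_0$ is not invertible; then $A=K_0$ is singular and contributes nothing directly. Instead I would argue with the real polynomial $q(t)=\det(A+tK_0)$ of degree at most $n$ in $t\in\R$. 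If $K_0$ is invertible, $q(t)=\det(K_0)\det(t I+K_0^{-1}A)$, which vanishes at $t=-\lambda$ for any real eigenvalue $\lambda$ of $K_0^{-1}A$. In general no real eigenvalue exists, so the plan is to replace $A$ by $A+B$ for a suitable $B\in K$-independent correction. The step I expect to be the real obstacle is exactly this: producing, for each $A$, some singular matrix in $A+K$. My first attempt would be to enlarge $K$ using $\Span_\R\{K_0\}$ together with the fact that $T$ maps singular to singular to show that $K$ is closed under multiplication by $i$ or contains a singular element. Failing that, I would use density: matrices $A$ for which $K_0^{-1}A$ has a real eigenvalue form a set with nonempty interior. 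On that set $T(A)$ is singular, so $\det\circ T$, which is a real polynomial, vanishes on an open set and hence identically. This density argument seems to be the right route and avoids needing real eigenvalues everywhere. The case where $K_0$ is singular is handled the same way, using matrices $A$ with $\det(A+tK_0)$ having a sign change.

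\emph{Step 2: complex span.} Consider $F(A,B)=\det(T(A)+zT(B))$ for $z\in\C$. For real $z$, $T(A)+zT(B)=T(A+zB)$ is singular by Step 1. Thus $z\mapsto F$ is a complex polynomial vanishing on $\R$, hence identically zero. So every complex combination of two image elements is singular, and by induction (absorbing real combinations into single images) every element of $\mathcal S$ is singular.

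\emph{Step 3: dimension.} By Dieudonné's theorem, a complex subspace of $M_n(\C)$ consisting of singular matrices has complex dimension at most $n(n-1)$. Therefore $\dim_\R T(M_n(\C))\le\dim_\R\mathcal S\le2n(n-1)$.
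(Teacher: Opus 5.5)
The gap is in Step 1, which carries all the content of the lemma; Steps 2 and 3 are fine and essentially match the paper. Writing every element of $\mathcal S$ as $T(A)+iT(B)$ is a clean way to handle Step 2.

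Your plan uses the kernel only through $T(A)=T(A+tK_0)$. It can therefore certify that $T(A)$ is singular only when the real line $A+\R K_0$ meets the singular matrices. That set of $A$ has empty interior in $M_n(\C)\cong\R^{2n^2}$. It is the image of $\R\times\{\text{singular matrices}\}$ under $(t,B)\mapsto B-tK_0$, so its real dimension is at most $(2n^2-2)+1$.

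Concretely, take $K_0=I$. For any $A$, the matrix $A+i\varepsilon I$ has no real eigenvalue for all but finitely many $\varepsilon$. So your claim that the matrices $A$ for which $K_0^{-1}A$ has a real eigenvalue form a set with nonempty interior is false over $\C$. It holds in $M_n(\R)$, which is not what you need. A polynomial vanishing on a real hypersurface need not vanish identically. The ``sign change'' variant for singular $K_0$ is not meaningful either, since $\det(A+tK_0)$ is complex-valued.

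A sanity check shows something essential is missing: nothing in your argument uses $n\ge2$, yet the lemma fails for $n=1$. On $M_1(\C)=\C$, the map $T(z)=\operatorname{Re}z$ is real linear and trivially maps singular to singular. Its kernel is $i\R\neq\{0\}$, but $T(1)=1$ is invertible.

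What you need is a way to use singularity preservation globally rather than point by point; the paper does this as follows.

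First, $p=\det\circ T$ is a homogeneous real polynomial of degree $n$ in the real coordinates of $A$. It vanishes whenever a row of $A$ is zero. Hence every monomial with nonzero coefficient uses exactly one real coordinate from each row, so $p$ is real multilinear in the rows.

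Second, $p$ also vanishes when two rows are complex linearly dependent. By additivity in a row, adding a complex multiple of one row to another leaves $p$ unchanged. So $p(SA)=p(A)$ for all $S\in\mathrm{SL}(n,\C)$.

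This orbit freedom is exactly what your line $A+\R K_0$ lacks. For invertible $A$ and $0\ne V\in\ker T$, choose $x$ with $Vx\ne0$ and $S\in\mathrm{SL}(n,\C)$ with $SAx=Vx$; this is where $n\ge2$ enters. Then $SA-V$ is singular and $T(SA-V)=T(SA)$, so $p(A)=p(SA)=p(SA-V)=0$.
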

\begin{proof}
Put $p(A)=\det T(A)$, a homogeneous polynomial of degree $n$ in the
real and imaginary parts of the entries of $A$.
It vanishes whenever a row of $A$ is zero, so each nonzero monomial
contains exactly one real coordinate from each row.
Thus $p$ is real multilinear in the rows.
Adding a complex multiple of one row to another leaves $p$ unchanged:
by additivity in that row, the extra term has two complex linearly
dependent rows and hence vanishes.
Since these elementary row operations generate $\mathrm{SL}(n,\C)$,
\[
 p(SA)=p(A)\qquad(S\in \mathrm{SL}(n,\C)).
\]

Take $0\ne V\in\ker T$.
For any invertible $A$, choose $x$ with $Vx\ne0$ and
$S\in \mathrm{SL}(n,\C)$ such that $SAx=Vx$.
Such an $S$ exists because $n\ge2$: extend $Ax$ and $Vx$ to bases,
and rescale a second target basis vector to make the determinant one.
Then $SA-V$ is singular, while $T(SA-V)=T(SA)$, so
\[
 p(A)=p(SA)=p(SA-V)=0.
\]
Thus every matrix in $T(M_n(\C))$ is singular.
For any finite collection $T_1,\ldots,T_k$ in this real linear space,
the polynomial $\det(\sum_jz_jT_j)$ vanishes for all real
$z_1,\ldots,z_k$, and hence identically.
Therefore every matrix in $\mathcal S$ is singular.

Dieudonn\'e's theorem on spaces of singular matrices
\cite[Th\'eor\`eme~1]{D1949} gives
$\dim_{\C}\mathcal S\le n(n-1)$. Hence
\[
 \dim_{\R}T(M_n(\C))
 \le2\dim_{\C}\mathcal S\le2n(n-1).
\]
\end{proof}

\Needspace{10\baselineskip}
\begin{proposition}\label{prop:finite-bijectivity}
Every nonzero additive map $\Phi:M_n(\C)\to M_n(\C)$ preserving
$\morth$ in one direction is bijective.
\end{proposition}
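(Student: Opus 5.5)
Since $M_n(\C)$ is finite dimensional and, by Lemma~\ref{lem:finite-real-linearity}, $\Phi$ is real linear, bijectivity is equivalent to $\ker\Phi=\{0\}$. So suppose $K=\ker\Phi\ne\{0\}$ and derive a contradiction. By Lemma~\ref{lem:singularity}, every nonzero element of $K$ is invertible and $\Phi$ preserves singularity. Lemma~\ref{lem:singular-range} then shows that $\mathcal S=\Span_\C\Phi(M_n(\C))$ consists of singular matrices and that $\dim_\R\Phi(M_n(\C))\le2n(n-1)$. Hence
\[
\dim_\R K\ge 2n^2-2n(n-1)=2n.
\]
On the other hand, $K\setminus\{0\}$ consists of invertible matrices. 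A real subspace of $M_n(\C)\cong M_{2n}(\R)$ all of whose nonzero elements are invertible has real dimension at most $2n$ (evaluate at a fixed nonzero vector: $B\mapsto Bx$ is injective on it). So $\dim_\R K=2n$ exactly, and the map $B\mapsto Bx$ is a real isomorphism $K\to\C^n$ for every $x\neq0$. The dimension count alone is therefore not contradictory, and the neighborhood spans have to enter.

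To get the contradiction, I would use Lemma~\ref{lem:hyperplane-span} on a suitable singular $A$. Since $\mathcal N(A)$ is mapped into $\mathcal N(\Phi(A))$, real linearity gives
\[
\Phi(\mathcal V(A))\subseteq\Span\,\mathcal N(\Phi(A)).
\]
I would take $A$ to be a rank-one nilpotent $N$, so that $\mathcal V(N)=M_n(\C)$ by Lemma~\ref{lem:span}. Then $\mathcal N(\Phi(N))$ spans a space containing $\Phi(M_n(\C))$. Since $\Phi(N)\ne0$ is singular, every element of $\mathcal N(\Phi(N))$ is singular, and each $B\in\mathcal N(\Phi(N))$ satisfies $\Phi(N)\sorth B$.

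Now fix a nonzero $E\in K$, which is invertible. For any $A$, we have $\Phi(A+tE)=\Phi(A)$ for all real $t$. Choose $A$ singular such that $A+tE$ is also singular for some $t$ and the corresponding images carry information; for example, take $A=N$ and $t$ with $\det(N+tE)=0$ (such a real $t$ exists after adjusting the phase of $E$, since $K$ is stable only under real scalars). Then $N$ and $N+tE$ are both singular with the same image. I expect the key step to be transporting $\mathcal N$: the elements $B$ with $N\morth B$ and with $(N+tE)\morth B$ all map into $\mathcal N(\Phi(N))$. Combining these with Lemma~\ref{lem:hyperplane-span} for $A=N+tE$, I would aim to show that $\Span\,\mathcal N(\Phi(N))$ contains the image of both $\mathcal V(N)$ and a hyperplane-type space $\mathcal H_m$.

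The main obstacle is extracting a contradiction from this. My first attempt would be to use the sharper structure $\dim_\R K=2n$ with $K$ acting transitively on nonzero vectors. I would pick $E\in K$ mapping a given vector $x$ to any prescribed vector, produce a singular $A$ and a nonzero $F\in\mathcal N(A)$ with $F-A\in K$ or $F+A\in K$. Then $\Phi(F)=\pm\Phi(A)$ together with $\Phi(A)\morth\Phi(F)$ gives, by \eqref{eq:self-orthogonality}, $\Phi(A)=0$. That would contradict Lemma~\ref{lem:singularity}, which says $K$ contains no nonzero singular matrix. Concretely: take $A=x\otimes y$ rank one with $\langle y,x\rangle\neq0$ and maximizing vector $m$. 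Look for $E\in K$ with $A+E$ rank one in $\mathcal N(A)$, i.e.\ $(A+E)^*m=0$ and a matching witness. I would find such $E$ via the transitivity $B\mapsto B^*m$ (also $2n$-dimensional, applied to $K^*$), which supplies $E$ with $E^*m=-A^*m$. Then $F=A+E$ satisfies $F^*m=0$, so $A\sorth F$; checking $F\sorth A$ (needing a maximizing vector of $F$ in $\ker A^*$) is the delicate point.
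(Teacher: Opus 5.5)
Your preliminary reductions are correct: real linearity, Lemma~\ref{lem:singularity}, Lemma~\ref{lem:singular-range}, and the evaluation argument do give $\dim_\R K=2n$ and $\dim_\R\Phi(M_n(\C))=2n(n-1)$ exactly. However, the proposal never reaches a contradiction. There are three partial attempts, and none of them closes.
\begin{itemize}
\item The rank-one-nilpotent idea only yields $E:=\Phi(M_n(\C))\subseteq\Span_\R\bigl(\mathcal N(\Phi(N))\cap E\bigr)$. This is not contradictory by itself.
\item The $\det(N+tE)=0$ idea is vague. You also cannot ``adjust the phase'' of $E$, since $K$ is only a real subspace.
\item In the concrete attempt, $B\mapsto B^*m$ is injective on $K$. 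So for fixed $A$ the element $E\in K$ with $E^*m=-A^*m$ is uniquely determined, and nothing lets you force a maximizing vector of $F=A+E$ into $\ker A^*=x^\perp$. The step $F\sorth A$ is exactly what is missing, and no property of $K$ you have established controls the numerical range of $F$.
\end{itemize}

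The missing idea is an extremal argument inside $E$. It has three steps.
\begin{itemize}
\item Every $\mathcal H_m$ consists of singular matrices, and $K$ contains no nonzero singular matrix. So $\Phi$ is injective on $\mathcal H_m$. Since $\dim_\R\mathcal H_m=2n(n-1)\ge\dim_\R E$, this gives $\Phi(\mathcal H_m)=E$ for every unit vector $m$.
\item Choose $T\in E$ with $\|T\|_{\mathrm{HS}}=1$ maximizing $w$ on that sphere. If $D\in E$ and $T\sorth D$, then $w(T)\le w(T+sD)\le w(T)\|T+sD\|_{\mathrm{HS}}$ for all real $s$. This forces $\operatorname{Re}\tr(T^*D)=0$.
\item Write $T=\Phi(A)$ with $A$ nonzero and singular, which is possible by the first step. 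Then preservation and real linearity put $\Phi(\mathcal V(A))$ inside the real hyperplane $\{D\in E:\operatorname{Re}\tr(T^*D)=0\}$. But Lemma~\ref{lem:hyperplane-span}, for a maximizing vector $m$ of $A$, gives $A\in\mathcal V(A)$ or $\mathcal H_m\subseteq\mathcal V(A)$. In both cases $T\in\Phi(\mathcal V(A))$, the second via $\Phi(\mathcal H_m)=E$. This contradicts $\operatorname{Re}\tr(T^*T)=1$.
\end{itemize}
Your nilpotent approach fails precisely because the extremal $T$ need not be the image of a rank-one nilpotent. The surjectivity $\Phi(\mathcal H_m)=E$, combined with Lemma~\ref{lem:hyperplane-span}, is what handles an arbitrary $T$.
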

\begin{proof}
By Lemmas~\ref{lem:finite-real-linearity} and~\ref{lem:singularity},
$\Phi$ is real linear and preserves singularity, and
$K:=\ker\Phi$ contains no nonzero singular matrix.
Put $E=\Phi(M_n(\C))$.
We use the Hilbert--Schmidt norm
$\|T\|_{\mathrm{HS}}=(\tr(T^*T))^{1/2}$.
Suppose that $K\ne\{0\}$.
Lemma~\ref{lem:singular-range} gives
$\dim_{\R}E\le2n(n-1)$.
For every unit vector $m$, the space $\mathcal H_m$ consists of
singular matrices and has real dimension $2n(n-1)$.
Since $K$ contains no nonzero singular matrix, the restriction of
$\Phi$ to $\mathcal H_m$ is injective. Thus
\[
 2n(n-1)=\dim_{\R}\mathcal H_m
 \le\dim_{\R}E\le2n(n-1),
\]
and hence
\begin{equation}\label{eq:finite-hyperplane-image}
 \Phi(\mathcal H_m)=E\qquad(\|m\|=1).
\end{equation}

Choose $T\in E$ with $\|T\|_{\mathrm{HS}}=1$ such that $w(T)$ is
maximal on this sphere.
For $D\in E$ with $D\morth T$ and every $s\in\R$,
\[
 w(T)\le w(T+sD)\le w(T)\|T+sD\|_{\mathrm{HS}}.
\]
Expanding the square of the last norm for positive and negative $s$
gives $\operatorname{Re}\tr(T^*D)=0$.
By~\eqref{eq:finite-hyperplane-image}, there is a nonzero singular
matrix $A$ with $\Phi(A)=T$.
Preservation and real linearity now imply
\[
 \Phi(\mathcal V(A))
 \subseteq\{D\in E:\operatorname{Re}\tr(T^*D)=0\}.
\]
Choose a unit maximizing vector $m$ for $A$.
Lemma~\ref{lem:hyperplane-span} gives either $A\in\mathcal V(A)$
or $\mathcal H_m\subseteq\mathcal V(A)$.
In either case, using~\eqref{eq:finite-hyperplane-image} in the
second, we obtain $T\in\Phi(\mathcal V(A))$.
This contradicts the preceding inclusion and
$\|T\|_{\mathrm{HS}}=1$.
Thus $K=\{0\}$.
Real linearity and finite dimensionality now make $\Phi$ bijective.
\end{proof}

\begin{proof}[Proof of Theorem~\ref{thm:full} in finite dimension]
Let $\Phi$ be a nonzero additive map preserving $\morth$ in one
direction.
By Lemma~\ref{lem:finite-real-linearity} and
Proposition~\ref{prop:finite-bijectivity}, it is a real-linear
bijection, and Lemma~\ref{lem:singularity} gives preservation of
singularity.
The additive singularity-preserver theorem
\cite[Theorem~1.4]{KS2026}, with one matrix block, gives invertible
$X,Y\in M_n(\C)$ and a field endomorphism $\sigma:\C\to\C$ such that
\begin{equation}\label{eq:finite-semilinear}
 \Phi(A)=XA^\sigma Y
 \quad\text{or}\quad
 \Phi(A)=X(A^\sigma)^tY,
\end{equation}
where $A^\sigma$ denotes entrywise application of $\sigma$.
For $t\in\R$, real linearity gives
\[
 \sigma(t)XE_{11}Y=\Phi(tE_{11})=t\Phi(E_{11})=tXE_{11}Y.
\]
Since $XE_{11}Y\ne0$, we have $\sigma(t)=t$ for every real $t$.
Together with $\sigma(i)=\pm i$, this shows that $\sigma$ is either
the identity or complex conjugation.
In particular, $\Phi$ preserves rank.

Let $N$ be a nonzero rank-one nilpotent.
Real linearity, bijectivity, and Lemma~\ref{lem:span} give
\[
 M_n(\C)=\Phi(\mathcal V(N))\subseteq\mathcal V(\Phi(N)).
\]
The rank-one matrix $\Phi(N)$ is therefore nilpotent, again by
Lemma~\ref{lem:span}.
Put $K=YX$.
For every real rank-one nilpotent $N$, we have $N^\sigma=N$,
so the nilpotency of $\Phi(N)$ gives $\tr(NK)=0$ in the first
form of~\eqref{eq:finite-semilinear} and $\tr(N^tK)=0$ in the second.
Taking $N=E_{ij}$ for all $i\ne j$ shows that $K$ is diagonal.
Taking
\[
 N=(e_i+e_j)\otimes(e_i-e_j)\qquad(i\ne j)
\]
then gives $K_{ii}=K_{jj}$ in either case.
Hence $YX=cI$ for some $c\ne0$.
After dividing $\Phi$ by $c$, we obtain a unital map
\[
 \Psi(A)=XA^\sigma X^{-1}
 \quad\text{or}\quad
 \Psi(A)=X(A^\sigma)^tX^{-1}.
\]

For every rank-one orthogonal projection $P$, we have $P\morth I-P$.
Its image $Q=\Psi(P)$ is a rank-one idempotent satisfying
$Q\sorth I-Q$.
Lemma~\ref{lem:idempotent} shows that $Q$ is an orthogonal projection.
Apply this to the real projections
\[
 P=E_{ii}
 \quad\text{and}\quad
 P=P_{ij}:=\tfrac12(E_{ii}+E_{ij}+E_{ji}+E_{jj}),\qquad i\ne j.
\]
Since $P^\sigma=P^t=P$, the matrix $XPX^{-1}$ is self-adjoint,
so $X^*X$ commutes with $P$.
Commutation with all $E_{ii}$ makes $X^*X$ diagonal, and
commutation with all $P_{ij}$ makes its diagonal entries equal.
Thus $X^*X=rI$ for some $r>0$, and $X=\sqrt r\,U$ with $U$ unitary.

The transpose alternatives are impossible.
Indeed, $E_{12}\morth E_{22}$, whereas
$E_{22}\not\sorth E_{21}$, because
\[
 E_{22}+E_{21}(-E_{12})=0.
\]
The same real pair excludes the transpose followed by entrywise
conjugation.
We conclude that
\[
 \Phi(A)=cUAU^*\quad\text{or}\quad
 \Phi(A)=cU\overline A\,U^*.
\]
If $J$ is the standard conjugation on $\C^n$, the second form equals
$c(UJ)A(UJ)^*$, where $UJ$ is antiunitary.
Thus both forms have the representation~\eqref{eq:full-standard}.
Conversely, they preserve $\sorth$, and hence $\morth$, in both
directions by Lemma~\ref{lem:standard}.
\end{proof}

\section{The infinite-dimensional case of Theorem~\texorpdfstring{\ref{thm:full}}{\getrefnumber{thm:full}}}\label{sec:infinite}

Throughout this section, $H$ is an infinite-dimensional complex Hilbert space.
We first show that the numerical radius is preserved up to a constant
factor, without assuming surjectivity. Write $\mathcal F(H)$ for
the space of finite-rank operators on $H$.

\begin{proposition}\label{prop:radius-similarity}
Let $\Phi:\BH\to\BH$ be a nonzero additive map satisfying
\begin{equation}\label{inf-forward}
 A\morth B\quad\Longrightarrow\quad\Phi(A)\morth\Phi(B)
 \qquad(A,B\in\BH).
\end{equation}
Then $\Phi$ is bounded and real linear, and there is a constant $\kappa>0$
such that
\begin{equation}\label{inf-radius-similarity}
 w(\Phi(A))=\kappa w(A)\qquad(A\in\BH).
\end{equation}
In particular, $\Phi$ is injective.
\end{proposition}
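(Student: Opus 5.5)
The plan follows the outline in the introduction: first automatic boundedness and real linearity, then the scaling identity on finite-rank operators, and finally its extension to all of $\BH$.

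\emph{Boundedness and real linearity.} I would adapt the proof of Lemma~\ref{lem:finite-real-linearity}. Put $q=w\circ\Phi$. For every orthogonal projection $P$ (of any rank) we have $P\morth I-P$; taking the right multiplier $X=I$ gives $q(P)\le q(I)$. As in finite dimension, the identity $t(P_x-P_y)=P_{ax+by}-P_{bx+ay}$ bounds $t\mapsto\Phi(tP_x)$ near $0$, so this map is real linear. To reach all of $\BH$ I would first treat self-adjoint operators with finite spectrum through their spectral projections. Then I would show that $q$ is bounded on the unit ball of $\BSH$, using the mutual orthogonality of orthogonal pieces: for $A$ and $B$ with orthogonal supports, $\Phi(A)\morth\Phi(B)$, and the multiplier $X=I$ gives $q(A)\le q(A+B)$. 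Norm approximation by finite-spectrum operators should then give continuity. The complex part $A=G+iL$ is handled exactly as before.

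\emph{Scaling on finite-rank operators.} Set $\kappa=q(P_x)/1$. This is independent of $x$, because $P_x\morth P_y$ for orthogonal $x,y$ and the projections onto intermediate rays link any two. The key tool is this: if $A$ and $B$ have orthogonal supports, then $q(A)\le q(A+B)$ and $q(B)\le q(A+B)$. For finite-rank $F$ supported on a finite-dimensional $M$, I would compare $F$ with $w(F)P_z$ for $z\perp M$. The aim is to show $q(F)=\kappa w(F)$ by an iteration: perturb by orthogonal finite-rank pieces, and use the maximizing-ray characterisation of Lemma~\ref{lem:witness} to get $\Phi(F)\morth\Phi(cP_z)$ when $w(F)=c$. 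Combined with the reverse inequality from the multiplier $X=I$, this should give equality. I expect this step to be delicate, since mutual orthogonality only yields inequalities of the form $w(S)\le w(S+T\cdot)$, and both directions have to be extracted.

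\emph{Extension to all of $\BH$, and injectivity.} For the upper bound I would compress $A$ to finite-dimensional subspaces $P_nAP_n$. Averaging over conjugations by diagonal unitaries expresses these compressions as averages of unitary conjugates of $A$, which together with continuity gives $q(A)\le\kappa w(A)$. For the lower bound I would take a near-maximizing unit vector $x$ for $A$ and compare $A$ with $A+tP_x$. Using the directional derivative of $w$ at $P_x$, namely $w(P_x+sA)=1+s\operatorname{Re}\langle x,Ax\rangle+o(s)$ after fixing the phase, together with the finite-rank identity, I would force $q(A)\ge\kappa w(A)$. This lower bound is the main obstacle. Injectivity is then immediate from~\eqref{inf-radius-similarity}, since $w$ is a norm.
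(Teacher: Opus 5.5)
Your outline matches the paper's: continuity, scaling on finite-rank operators, an upper bound via compressions, and a lower bound via the derivative of $w$ at $P_x$. Your lower-bound step is essentially the paper's Step~5, \emph{provided} you already have the global bound $q\le\kappa w$, because it is applied to $e^{i\theta}P_x-tA$, which is not finite rank. The gaps are in the three earlier steps.

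\emph{Continuity.} Bounding $q$ on finite-spectrum self-adjoint contractions and then approximating in norm does not give continuity of an additive map. In $A=F+(A-F)$ the term $\Phi(A-F)$ is uncontrolled unless continuity is already known; an additive $f:\R\to\R$ can vanish on $\mathbb Q$ without being continuous. Your inequality $q(A)\le q(A+B)$ (multiplier $X=I$) only bounds parts by wholes, so it cannot bound $q$ on the ball. The missing idea is a relation in which an arbitrary operator cancels. If $D$ is supported on $\{x,y\}^\perp$ with $w(D)\le1$, then $P_x+D\morth P_y+D$. The multiplier $X=-I$ on the images gives $q(P_x+D)\le q(P_x-P_y)$, hence $q(D)\le q(P_x-P_y)+q(P_x)$. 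A $3\times3$ block decomposition along $H=H_1\oplus H_2\oplus H_3$, with infinite-dimensional summands, then bounds $q$ on the whole unit ball.

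\emph{Finite-rank scaling.} $P_x\morth P_y$ does not force $q(P_x)=q(P_y)$. It gives only $q(P_x)\le q(P_x\pm P_y)$ and $q(P_y)\le q(P_y\pm P_x)$, and mutual orthogonality allows unequal radii (e.g.\ $P_x\morth 2P_y$). What you need is the estimate $q(A+F)\le q(A)$ for finite-rank $F$ supported orthogonally to $A$ with $w(F)\le w(A)$, and this is exactly the step you leave open. The paper uses $A+F_j\morth F_j+t_jP_j$, where $t_j=w(F_j)$ and $P_j$ is a fresh orthogonal rank-one projection. The multiplier $X=-\tfrac12 I$ gives $q(A+F_j)\le q(A+F_{j+1})$, with $F_{j+1}=\tfrac12(F_j-t_jP_j)$ of half the radius, and continuity finishes. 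It follows that $q(A)=q(A+B)=q(B)$ for orthogonally supported finite-rank $A,B$ with $w(A)=w(B)$, which is what makes $\kappa$ well defined.

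\emph{Upper bound.} Compressions $P_nAP_n$ to finite-dimensional subspaces converge to $A$ only strongly, not in norm (unless $A$ is compact), so norm continuity of $\Phi$ cannot pass to the limit. Also, $P_nAP_n$ is not an average of unitary conjugates of $A$: for $A=I$ every such average equals $I$. In any case nothing is known about $q(UAU^*)$. The paper removes only a rank-two corner. If $D$ is supported on $\{x,y\}^\perp$ and $r=w(D)$, then $D+rP_x\morth D+rP_y$ and $D-rP_x\morth D+rP_y$. The multiplier $X=-I$ and the finite-rank identity give $q(D\pm rP_x)\le\kappa r$, and averaging gives $q(D)\le\kappa w(D)$. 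Apply this to $D_j=(I-Q_j)A(I-Q_j)$ with pairwise orthogonal rank-two projections $Q_j$. Since $\|A-\tfrac1m\sum_{j\le m}D_j\|\le3\|A\|/m$, you get $q\le\kappa w$ on all of $\BH$. Only then does your derivative argument at $P_x$ close the proof; in the paper that final step is the easy one, not the main obstacle.
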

\begin{proof}
Put $q(A)=w(\Phi(A))$.

\smallskip
\noindent\textbf{Step 1: automatic continuity.}
Fix orthonormal vectors $x,y\in H$, and put $P=P_x$ and $Q=P_y$.
If $D$ is supported on $\{x,y\}^{\perp}$ and $w(D)\le1$, then
\[
 P+D\morth Q+D.
\]
Indeed, both numerical radii equal $1$, and compression to $\C x$ and
$\C y$ gives the two defining inequalities.  By preservation, taking
$X=-I$ in the inequality for the images and using additivity gives
\begin{equation}\label{inf-corner-bound}
 q(P+D)\le q(P-Q),\qquad q(D)\le q(P-Q)+q(P).
\end{equation}
Choose an orthogonal decomposition $H=H_1\oplus H_2\oplus H_3$ with
each $H_i$ infinite dimensional, and let $E_i$ be the associated
projections.  Fix two orthonormal vectors in each $H_i$.  For $\|A\|\le1$,
write
\[
 A=\sum_{i,j=1}^{3}E_iAE_j.
\]
Each summand $E_iAE_j$ has norm at most $1$ and is supported on
$H_i+H_j$.  For each pair $(i,j)$, choose $k\notin\{i,j\}$ and
apply~\eqref{inf-corner-bound} with the fixed orthonormal pair in $H_k$.
This bounds $q(E_iAE_j)$ by a constant independent of $A$, so subadditivity
shows that $q$ is bounded on the operator-norm unit ball.
Since $\|\Phi(A)\|\le2q(A)$, $\Phi$ is bounded on the unit ball and hence
continuous and real linear.  Thus $q$ is a continuous real seminorm on $\BH$.

\smallskip
\noindent\textbf{Step 2: orthogonal finite-rank perturbations.}
Let $A,F\in\mathcal F(H)$ have orthogonal supports, and suppose that
$w(F)\le r:=w(A)$.  We claim that
\begin{equation}\label{inf-halving-identity}
 q(A+F)=q(A).
\end{equation}
To prove $q(A+F)\le q(A)$, we successively halve the numerical radius
of the perturbation. An auxiliary rank-one term in a new orthogonal
direction will provide the mutual orthogonality needed to compare
the values of $q$ before and after each step.
Set $F_0=F$.  Given $F_j$, put $t_j=w(F_j)$, choose a rank-one
orthogonal projection $P_j$ supported orthogonally to $A$ and $F_j$,
and set
\begin{equation}\label{inf-halving-recursion}
 G_j=F_j+t_jP_j,\qquad F_{j+1}=\frac{F_j-t_jP_j}{2}.
\end{equation}
Such a projection exists because the supports involved are finite
dimensional.  Each $F_j$ is finite rank and has support orthogonal to
that of $A$.  The direct-sum identity in~\eqref{eq:basic-radius} gives
\[
 w(G_j)=t_j,\qquad w(F_{j+1})=\frac{t_j}{2},\qquad w(A+F_j)=r.
\]
Moreover,
\[
 A+F_j\morth G_j.
\]
Indeed, for every $X\in\BH$, compressing $A+F_j+G_jX$ to the support
of $A$ gives $A$, while compressing $G_j+(A+F_j)X$ to $P_jH$ gives
$t_jP_j$.  These compressions have numerical radii $r$ and $t_j$,
respectively.
Preservation and the choice $X=-I/2$ therefore give
\begin{align*}
 q(A+F_j)
 &\le w\left(\Phi(A+F_j)-\tfrac12\Phi(G_j)\right)\\
 &=q\left(A+F_j-\tfrac12G_j\right)=q(A+F_{j+1}).
\end{align*}
Since $w(F_j)=2^{-j}w(F)$ and $\|F_j\|\le2w(F_j)\to0$, continuity gives
$q(A+F)\le q(A)$.  For the reverse inequality, $A\morth F$ and preservation, with
$X=I$, give $q(A)\le q(A+F)$.  This proves~\eqref{inf-halving-identity}.

\smallskip
\noindent\textbf{Step 3: one constant on all finite-rank operators.}
If $A,B\in\mathcal F(H)$ have orthogonal supports and $w(A)=w(B)$,
then~\eqref{inf-halving-identity}, applied in both orders, gives
\[
 q(A)=q(A+B)=q(B).
\]
Fix a rank-one orthogonal projection $P_0$, and put $\kappa=q(P_0)\ge0$.
For any nonzero $F\in\mathcal F(H)$, choose a rank-one orthogonal
projection $P$ supported orthogonally to both $F$ and $P_0$.
The preceding equality gives
\[
 q(F/w(F))=q(P)=q(P_0)=\kappa.
\]
Real homogeneity now yields
\begin{equation}\label{inf-finite-radius}
 q(F)=\kappa w(F)\qquad(F\in\mathcal F(H)).
\end{equation}

\smallskip
\noindent\textbf{Step 4: a global upper bound.}
Suppose first that $D$ is supported on the orthogonal complement of two
orthonormal vectors $x,y$.  Put $P=P_x$, $Q=P_y$, and $r=w(D)$.
The case $r=0$ is immediate.  If $r>0$, compression to $\C x$ and
$\C y$ gives
\[
 D+rP\morth D+rQ,\qquad D-rP\morth D+rQ.
\]
Applying preservation, taking $X=-I$, and using~\eqref{inf-finite-radius},
we obtain
\[
 q(D+rP)\le q(rP-rQ)=\kappa r,\qquad
 q(D-rP)\le q(-rP-rQ)=\kappa r.
\]
Averaging these two inequalities yields
\begin{equation}\label{inf-corner-upper}
 q(D)\le\kappa w(D).
\end{equation}

Now let $A\in\BH$ be arbitrary.  Choose pairwise orthogonal rank-two
projections $Q_1,Q_2,\ldots$, and put
\[
 D_j=(I-Q_j)A(I-Q_j),\qquad Q^{(m)}=\sum_{j=1}^{m}Q_j.
\]
By~\eqref{inf-corner-upper} and compression,
$q(D_j)\le\kappa w(D_j)\le\kappa w(A)$.  Also,
\begin{equation}\label{inf-compression-average}
 A-\frac1m\sum_{j=1}^{m}D_j
 =\frac1m\left(Q^{(m)}A+AQ^{(m)}
              -\sum_{j=1}^{m}Q_jAQ_j\right).
\end{equation}
The block-diagonal operator $\sum_{j=1}^{m}Q_jAQ_j$ has norm at most
$\|A\|$, as do the other two terms in parentheses.
Thus the norm of~\eqref{inf-compression-average} is at most $3\|A\|/m$,
so the averages $m^{-1}\sum_{j=1}^{m}D_j$ converge to $A$ in norm.
Continuity and the triangle inequality for $q$ imply
\begin{equation}\label{inf-global-upper}
 q(A)\le\kappa w(A)\qquad(A\in\BH).
\end{equation}
If $\kappa=0$, this would force $\Phi=0$.  Hence $\kappa>0$.

\smallskip
\noindent\textbf{Step 5: the matching lower bound.}
For a unit vector $x$ and an operator $A$, we first verify
\begin{equation}\label{inf-projection-derivative}
 w(P_x+tA)=1+t\operatorname{Re}\langle x,Ax\rangle
              +O(t^2\|A\|^2)
 \qquad(t\in\R,\ t\to0).
\end{equation}
Let $a=\langle x,Ax\rangle$ and $M=\|A\|$.  Write a unit vector as
$z=\alpha x+y$, where $y\perp x$, and set $s=\|y\|$.  Then
\begin{align*}
 |\langle z,(P_x+tA)z\rangle|
 &\le(1-s^2)|1+ta|+2|t|Ms+|t|Ms^2\\
 &=|1+ta|-bs^2+2|t|Ms,\qquad b=|1+ta|-|t|M.
\end{align*}
If $|t|M\le1/4$, then $b\ge1/2$.  Maximizing this quadratic bound in $s$
and taking $z=x$ for the lower bound, we obtain
\[
 |1+ta|\le w(P_x+tA)\le |1+ta|+2t^2M^2.
\]
This proves~\eqref{inf-projection-derivative}.

Fix a unit vector $x$ and $\theta\in\R$.  For $t>0$,
\eqref{inf-finite-radius}, the triangle inequality, and
\eqref{inf-global-upper} give
\begin{align*}
 \kappa=q(e^{i\theta}P_x)
 &\le q(e^{i\theta}P_x-tA)+tq(A)\\
 &\le\kappa w(e^{i\theta}P_x-tA)+tq(A).
\end{align*}
Consequently,
\[
 q(A)\ge\kappa\frac{1-w(e^{i\theta}P_x-tA)}{t}.
\]
Since $w(e^{i\theta}P_x-tA)=w(P_x-te^{-i\theta}A)$,
applying~\eqref{inf-projection-derivative} to $-e^{-i\theta}A$ and letting
$t\downarrow0$ gives
\[
 q(A)\ge\kappa\operatorname{Re}
             \bigl(e^{-i\theta}\langle x,Ax\rangle\bigr).
\]
Taking the supremum over $x$ and $\theta$ yields $q(A)\ge\kappa w(A)$.
Together with~\eqref{inf-global-upper}, this proves
\eqref{inf-radius-similarity}.  Injectivity follows because $w$ is a norm.
\end{proof}

\begin{proof}[Proof of Theorem~\ref{thm:full} in infinite dimension]
The converse follows from Lemma~\ref{lem:standard}.  For necessity,
let $\Phi$ be an additive surjection satisfying the one-direction
preservation condition.  By Proposition~\ref{prop:radius-similarity},
there is $\kappa>0$ such that $w(\Phi(A))=\kappa w(A)$ for every $A$.
The surjective map $\Psi=\kappa^{-1}\Phi$ therefore satisfies
\[
 w\bigl(\Psi(A)-\Psi(B)\bigr)=w(A-B)\qquad(A,B\in\BH).
\]
Since $\Psi(0)=0$, the numerical-radius distance-isometry theorem of
Bai and Hou~\cite[Theorem~$1'$]{BH2004} yields a unimodular scalar $\mu$
and a unitary or antiunitary $U$ such that either
\[
 \Psi(A)=\mu UAU^*\qquad(A\in\BH),
\]
or
\begin{equation}\label{inf-adjoint-alternative}
 \Psi(A)=\mu UA^*U^*\qquad(A\in\BH).
\end{equation}

If~\eqref{inf-adjoint-alternative} held, Lemma~\ref{lem:standard}
would imply that the adjoint map preserves $\morth$.
Choose orthonormal $e_1,e_2$ and put $P=e_1\otimes e_1$ and
$N=e_2\otimes e_1$.  Lemmas~\ref{lem:witness} and~\ref{lem:nilpotent}
give $P\morth N$, whereas
\[
 P+N^*(-N)=P-N^*N=0
\]
shows that $P\not\sorth N^*$.  Thus~\eqref{inf-adjoint-alternative}
is impossible.  Taking $c=\kappa\mu$ gives
\[
 \Phi(A)=cUAU^*\qquad(A\in\BH),
\]
as required.
\end{proof}

\section{The self-adjoint case}\label{sec:selfadjoint}

Throughout this section, $\dim H\ge2$. We use $w(A)=\|A\|$ for
$A\in\BSH$. In particular, the definition gives
\begin{equation}\label{eq:selfadjoint-sum-bound}
 A\morth B
 \quad\Longrightarrow\quad
 \max\{\|A\|,\|B\|\}\le\|A\pm B\|
 \qquad(A,B\in\BSH).
\end{equation}
Also, $AB=0$ implies $A\morth B$ for $A,B\in\BSH$.

\begin{lemma}\label{lem:selfadjoint-continuity}
Every additive map $\Phi:\BSH\to\BSH$ preserving $\morth$ in one
direction is continuous and real linear.
\end{lemma}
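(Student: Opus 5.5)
Following the pattern of Lemma~\ref{lem:finite-real-linearity} and Step~1 of Proposition~\ref{prop:radius-similarity}, the goal is to show that $q(A)=\|\Phi(A)\|$ is bounded on a neighbourhood of zero in $\BSH$. An additive map between real normed spaces that is bounded near zero is continuous, and a continuous additive map is real linear. The key device is the implication~\eqref{eq:selfadjoint-sum-bound}. If $A\morth B$, then $\Phi(A)\morth\Phi(B)$, so
\[
 \|\Phi(A)\|\le\|\Phi(A)-\Phi(B)\|=\|\Phi(A-B)\|.
\]
(Equivalently, this follows by taking $X=-I$ in the defining inequality.) So it suffices to write each self-adjoint operator as $A$ with $A\morth B$, where $A-B$ ranges over a fixed bounded family on which $q$ is controlled.

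First I would fix orthonormal $x,y$ with $P=P_x$ and $Q=P_y$. For $|t|\le1$, the identity $t(P-Q)=P_{ax+by}-P_{bx+ay}$ from Lemma~\ref{lem:finite-real-linearity} holds in any $H$, and $(P_u)(I-P_u)=0$ gives $P_u\morth I-P_u$. Hence
\[
 q(P_u)\le q(I)\qquad\text{and}\qquad q(t(P-Q))\le2q(I).
\]
Since $tP\cdot tQ=0$, we also get $q(tP)\le q(t(P-Q))\le2q(I)$. So $t\mapsto\Phi(tP_u)$ is bounded near $0$, and therefore $\Phi(tP_u)=t\Phi(P_u)$ for real $t$.

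In finite dimension this already suffices, by the spectral decomposition. In general I would instead bound $q$ on the whole unit ball. Let $D\in\BSH$ with $\|D\|\le1$.

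\emph{Infinite dimension.} Split $H=H_1\oplus H_2$ with both summands infinite dimensional, and fix orthonormal $x,y$. I would mimic Step~1 of Proposition~\ref{prop:radius-similarity}. If $D$ is supported on $\{x,y\}^\perp$ with $\|D\|\le1$, then $(P+D)$ and $(Q+D)$ are mutually orthogonal by compression to $\C x$ and $\C y$. This gives
\[
 q(P+D)\le q(P-Q)\le 2q(I),
\]
and hence $q(D)\le 3q(I)$. For a general self-adjoint $D$ in the unit ball, use three infinite-dimensional summands $E_1,E_2,E_3$ and write $D=\sum_{i\le j}\bigl(E_iDE_j+E_jDE_i\bigr)$ (halving the diagonal terms appropriately). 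Each piece is self-adjoint, has norm at most $2$, and is supported on $H_i+H_j$, which misses some $H_k$ containing a fixed orthonormal pair. Scaling by $1/2$ and using the bound above bounds $q$ on the unit ball.

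\emph{Finite dimension.} With $n\ge2$, the support trick fails when $n=2$, so there I would just use spectral decomposition. Write $D=\sum_i\lambda_iP_{u_i}$ with $|\lambda_i|\le1$. Then $q(D)\le\sum_i q(\lambda_iP_{u_i})\le 2nq(I)$ by the bound on $t\mapsto\Phi(tP_u)$, which is uniform in the unit vector $u$.

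\textbf{Main obstacle.} The main obstacle is making this uniformity and the infinite-dimensional block decomposition rigorous. The key point is that the constant $2q(I)$ does not depend on the chosen vectors; once that is established, boundedness near zero gives continuity and real linearity.
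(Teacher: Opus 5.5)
Your proposal is correct and follows essentially the same route as the paper: the finite-dimensional case is the argument of Lemma~\ref{lem:finite-real-linearity}, and in infinite dimension you use $P_x+D\morth P_y+D$ for self-adjoint $D$ with $Dx=Dy=0$ and $\|D\|\le1$, together with the six-piece decomposition over three infinite-dimensional summands. The only difference is cosmetic: you express the bound as $3q(I)$ via $P_u\morth I-P_u$, whereas the paper uses the constant $\|\Phi(P_x)\|+\|\Phi(P_x)-\Phi(P_y)\|$ for the fixed pairs.
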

\begin{proof}
The finite-dimensional case follows from
Lemma~\ref{lem:finite-real-linearity}.
Suppose now that $H$ is infinite dimensional.
Fix orthonormal vectors $x,y\in H$. If $A\in\BSH$ satisfies
$Ax=Ay=0$ and $\|A\|\le1$, then
$P_x+A\morth P_y+A$.
Put $C=\Phi(P_x)$ and $D=\Phi(P_y)$.
Applying~\eqref{eq:selfadjoint-sum-bound} to the images gives
\[
 \|C+\Phi(A)\|\le\|C-D\|,
 \qquad
 \|\Phi(A)\|\le\|C\|+\|C-D\|.
\]
Thus $\Phi$ is bounded on the unit ball of
$\{A\in\BSH:Ax=Ay=0\}$.

Write $H=H_1\oplus H_2\oplus H_3$, with each $H_i$ infinite
dimensional, and let $Q_i$ be the corresponding orthogonal projections.
Every $A\in\BSH$ is the sum
\[
 A=\sum_{i=1}^3Q_iAQ_i+
   \sum_{1\le i<j\le3}(Q_iAQ_j+Q_jAQ_i).
\]
Each of these six self-adjoint summands has norm at most $\|A\|$
and is supported on at most two of the subspaces $H_i$.
Choose two fixed orthonormal vectors in each $H_i$.
The preceding estimate applies to each summand using the pair in an
omitted subspace. Hence $\Phi$ is bounded on the unit ball, and therefore
continuous and real linear.
\end{proof}

\Needspace{6\baselineskip}
\begin{lemma}\label{lem:selfadjoint-injectivity}
Every nonzero additive map $\Phi:\BSH\to\BSH$ preserving $\morth$
in one direction is injective.
\end{lemma}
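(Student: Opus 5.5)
By Lemma~\ref{lem:selfadjoint-continuity}, $\Phi$ is continuous and real linear. Put $K=\ker\Phi$. The key tool is the following observation. If $A\morth B$ with $A,B\in\BSH$ and $A+B\in K$, then $\Phi(A)=-\Phi(B)$ and $\Phi(A)\morth-\Phi(A)$. By scalar invariance this gives $\Phi(A)\sorth\Phi(A)$, so $\Phi(A)=0$ by~\eqref{eq:self-orthogonality}. Hence both $A$ and $B$ lie in $K$. The plan is to use this repeatedly to show that a nonzero element of $K$ forces $\Phi=0$.

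\textbf{Step 1: a kernel element with one spectral sign.} Let $0\ne A\in K$. If $A$ has both positive and negative spectrum, I would split it as $A=A_+ + A_-$ using the spectral decomposition. Since $A_+A_-=0$, we have $A_+\morth A_-$, so the observation puts $A_+$ and $A_-$ in $K$. Thus we may assume $A\ge0$ (replacing $A$ by $-A$ if needed). Next I would take a spectral projection $E$ of $A$ for an interval $[\|A\|-\varepsilon,\|A\|]$ and write $A=AE+A(I-E)$. The two pieces have orthogonal supports, so both lie in $K$. If $AE\ne I$-like, i.e.\ if $I-E\ne0$, then $AE$ is a nonzero positive kernel element that is close in norm to $\|A\|E$. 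Letting $\varepsilon\to0$ only gives approximation, so I would rather push towards a spectral projection in $K$ directly. The alternative is to show first that some rank-one projection is in $K$; continuity of $\Phi$ and closedness of $K$ will be used throughout.

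\textbf{Step 2: obtaining a rank-one projection in $K$, then spreading.} Fix orthonormal $x,y$. The proof of Lemma~\ref{lem:finite-real-linearity} shows
\[
t(P_x-P_y)=P_{ax+by}-P_{bx+ay},
\]
and the pair $P_{ax+by}$, $P_{bx+ay}$ is mutually orthogonal when the vectors are orthogonal, which happens at $t=0$. More generally, $P_u\morth-P_v$ for orthogonal unit $u,v$. So if $P_u-P_v\in K$, the observation gives $P_u,P_v\in K$. The plan is therefore as follows. From a positive $0\ne A\in K$ that is not a multiple of $I$, produce a difference $P_u-P_v$ in $K$ by conjugating with unitaries near $A$'s spectral data. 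If $A=\lambda I$ then $I\in K$, and $P_x\morth I-P_x$ gives $P_x\in K$ directly. Once one $P_u\in K$, I would use $P_u\morth P_v-$type relations to spread:
\[
\tfrac12(P_u+P_v)\pm\text{(off-diagonal)}
\]
decompositions inside $\Span\{u,v\}$. Alternatively, use that for orthogonal $u,v$ and the rotated pair $u',v'$ one has $P_u+P_v=P_{u'}+P_{v'}$, with $P_{u'}\morth P_{v'}$. Showing $P_v\in K$ for all $v\perp u$ would then give $P_{u'}$ and $P_{v'}$ in $K$. Via rotations all rank-one projections lie in $K$, hence all finite-rank self-adjoint operators do, and by the averaged-compression argument of Step~4 of Proposition~\ref{prop:radius-similarity}, or by density in the strong topology combined with the three-block decomposition, $\Phi=0$.

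\textbf{Main obstacle.} The hard step is getting from a general nonzero kernel element, especially a positive one with continuous spectrum, in infinite dimension, to a rank-one projection in $K$. The first thing I would try is to choose, for $A\ge0$ in $K$, unit vectors $u\perp v$ with $Au$ and $Av$ controlled. Then I would decompose $A=B+C$ with $B\morth C$ so that one piece is a nonzero finite-rank operator. An example is $B=\|A\|P_m+D$ where $D$ is supported on $m^\perp$, mimicking the singular-matrix step of Lemma~\ref{lem:singularity}. After that, the finite-dimensional machinery of Section~\ref{sec:finite} can be applied on a finite-dimensional reducing block.
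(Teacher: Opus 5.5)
There is a genuine gap. Your architecture matches the paper's, and your basic observation is correct: a nonzero kernel element forces a rank-one projection into $K=\ker\Phi$, and that in turn forces $\Phi=0$. But both halves are left without the idea that makes them work.

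\textbf{First half: reaching a rank-one projection in $K$.} The step you flag as the main obstacle is not resolved, and your suggested routes fail exactly in the hard case. Nothing makes $K$ invariant under unitary conjugation, so conjugating kernel elements does not produce $P_u-P_v\in K$. If $A$ has no nonzero eigenvalue, it has no norm-attaining vector $m$. It also has no finite-dimensional reducing block on which it is nonzero. So neither $\|A\|P_m+D$ nor Section~\ref{sec:finite} is available.

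The missing idea is that your observation already suffices once it is fed the right pair.
\begin{itemize}
\item If $A$ has a nonzero eigenvalue $\lambda$, a rank-one eigenprojection $P$ reduces $A$, so $\lambda P=AP\in K$.
\item Otherwise $\sigma(A)\setminus\{0\}$ is infinite, since isolated spectral points of a self-adjoint operator are eigenvalues. Choose three disjoint closed intervals avoiding $0$ with nonzero spectral projections $E_1,E_2,E_3$. Then $A_i=AE_i\in K$ for $i=1,2$. Take a unit $x\in E_3H$ and $0<t\le\min\{\|A_1\|,\|A_2\|\}$; this keeps $w(A_i+tP_x)=\|A_i\|$. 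Compression to $E_1H$ and $E_2H$ gives $A_1+tP_x\morth A_2+tP_x$, hence $A_1+tP_x\morth-(A_2+tP_x)$. This pair has sum $A_1-A_2\in K$, so your observation yields $tP_x\in K$.
\end{itemize}
The trick is to perturb two kernel elements with orthogonal supports by a common small term in a third orthogonal direction.

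\textbf{Second half: spreading.} As written, this step is circular. ``Showing $P_v\in K$ for all $v\perp u$'' is exactly what must be proved, and the rotation identity only helps afterwards. Your observation cannot supply it alone. When $\dim H=2$, $R\morth S$ forces $RS=0$, and one checks that no element of $\R P_u$ splits as $B+C$ with $B\morth C$ and $B,C\notin\R P_u$.

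For $\dim H\ge3$ the paper uses a norm argument. Suppose $\Phi(P_1)=0$, and let $P_2,P_3$ be rank-one projections orthogonal to $P_1$ and to each other. The relations $P_1+P_2\morth P_2\pm P_3$ and $P_1+P_3\morth P_3\pm P_2$ give $C\morth C\pm D$ and $D\morth D\pm C$, where $C=\Phi(P_2)$ and $D=\Phi(P_3)$. Hence $2\|C\|\le\|C-D\|\le\|D\|$ and $2\|D\|\le\|C\|$, so $C=D=0$. The case $\dim H=2$ needs a separate argument; the paper uses the spectral gap of $\Phi(I)$ and connectedness of the set of rank-one projections.

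\textbf{Final step: from finite rank to all of $\BSH$.} In infinite dimension, neither averaged compressions nor strong density gets you there. $\Phi$ is only norm continuous, and finite-rank operators are not norm dense. If $\Phi$ kills finite-rank operators, averaging yields only $\Phi(A)=\Phi(D_j)$. What works is to write $\Phi(T)=\Phi(T_0)$ with $T_0=QTQ$ and $Q=I-P_x-P_y$. For $t>\|T_0\|$ one has $tP_x+T_0\morth tP_y+T_0$, which forces $\Phi(T)\morth\Phi(T)$ and hence $\Phi(T)=0$.
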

\begin{proof}
By Lemma~\ref{lem:selfadjoint-continuity}, $\Phi$ is continuous and real linear.
Suppose first that $\dim H=2$.
In this case, $R\morth S$ is equivalent to $RS=0$ for $R,S\in\BSH$.
Indeed, if $R\morth S$ and neither matrix is zero, both are singular,
and hence nonzero real multiples of rank-one orthogonal projections.
Equation~\eqref{eq:rankone-kernel} then shows that their ranges are orthogonal.

Put $C=\Phi(I)$. For each rank-one orthogonal projection $P$, set
$R=\Phi(P)$ and $S=\Phi(I-P)$. Then $RS=SR=0$ and $C=R+S$.
If $C=0$, then $R^2=0$ and hence $R=0$ for every $P$.
The spectral decomposition would give $\Phi=0$, a contradiction.
Thus $C\ne0$. Every nonzero eigenvalue of $R$ is an eigenvalue of $C$,
since $S$ vanishes on $\ran R$. Consequently,
\[
 \Phi(P)=0\quad\text{or}\quad\|\Phi(P)\|\ge\delta,
 \qquad
 \delta=\min\{|\lambda|:\lambda\in\sigma(C)\setminus\{0\}\}>0.
\]
The rank-one orthogonal projections form a connected set, and
$P\mapsto\|\Phi(P)\|$ is continuous. Since $\Phi\ne0$ and these
projections span $\BSH$ over $\R$, this gap implies $\Phi(P)\ne0$
for every such $P$.
If $\Phi(aP+b(I-P))=0$ for $a,b\in\R$, then $aR+bS=0$.
The ranges of $R$ and $S$ are orthogonal and both matrices are nonzero,
so $a=b=0$. The spectral decomposition now gives injectivity.

Assume now that $\dim H\ge3$.
We first show that $\Phi(P)\ne0$ for every rank-one orthogonal
projection $P$.
Suppose that $P_1,P_2,P_3$ are mutually orthogonal rank-one projections
with $\Phi(P_1)=0$, and put $C=\Phi(P_2)$ and $D=\Phi(P_3)$.
By compression, we have
\[
 P_1+P_2\morth P_2\pm P_3,
 \qquad
 P_1+P_3\morth P_3\pm P_2.
\]
These relations give $C\morth C\pm D$ and $D\morth D\pm C$.
Taking $X=-I/2$ in $C\sorth C+D$, and $X=-I$ in
$C-D\sorth C$, yields
\[
 2\|C\|\le\|C-D\|\le\|D\|.
\]
Interchanging $C$ and $D$ gives $2\|D\|\le\|C\|$, so $C=D=0$.
Thus $\Phi$ vanishes on all rank-one projections orthogonal to $P_1$.
Since $\dim H\ge3$, for every rank-one projection $P$ there is a rank-one
projection $Q$ orthogonal to both $P_1$ and $P$.
The preceding argument gives $\Phi(Q)=0$ and then $\Phi(P)=0$.
Hence $\Phi$ vanishes on every finite-rank self-adjoint operator.

For any $T\in\BSH$, choose orthonormal $x,y$ and put
$Q=I-P_x-P_y$ and $T_0=QTQ$.
Since $T-T_0$ has finite rank, $\Phi(T)=\Phi(T_0)$.
For $t>\|T_0\|$ we have
\[
 tP_x+T_0\morth tP_y+T_0.
\]
Both images equal $\Phi(T)$, so
\eqref{eq:self-orthogonality} gives $\Phi(T)=0$.
This contradicts $\Phi\ne0$ and proves the claim.

Now suppose that $0\ne A\in\ker\Phi$.
For every reducing orthogonal projection $P$ of $A$, the operators
$AP$ and $A(I-P)$ have orthogonal supports.
Since $\Phi(AP)\morth\Phi(A(I-P))$ and their sum is zero,
we obtain $\Phi(AP)=0$.
If $A$ has a nonzero eigenvalue, a corresponding rank-one projection
therefore belongs to $\ker\Phi$, contradicting the claim.
Otherwise, $\sigma(A)\setminus\{0\}$ is infinite: if it were finite,
each of its points would be isolated in $\sigma(A)$ and hence an
eigenvalue of $A$. We may therefore choose three disjoint closed
intervals avoiding zero whose spectral projections $E_1,E_2,E_3$
for $A$ are nonzero.
Set $A_i=AE_i$ for $i=1,2$, choose a unit vector $x\in E_3H$, and take
\[
 0<t\le\min\{\|A_1\|,\|A_2\|\},\qquad F=tP_x.
\]
We have $\Phi(A_1)=\Phi(A_2)=0$.
Since $A_i$ and $F$ have orthogonal supports and $\|F\|=t\le\|A_i\|$,
the direct-sum identity gives
\[
 w(A_i+F)=\max\{\|A_i\|,t\}=\|A_i\|=w(A_i)
 \qquad(i=1,2).
\]
Compression to $E_1H$ and $E_2H$ now gives
$A_1+F\morth A_2+F$.
Both images equal $\Phi(F)$, so $\Phi(P_x)=0$, again a contradiction.
Thus $\Phi$ is injective.
\end{proof}

\begin{proof}[Proof of Theorem~\ref{thm:selfadjoint}]
By Lemmas~\ref{lem:selfadjoint-continuity} and~\ref{lem:selfadjoint-injectivity},
$\Phi$ is continuous, real linear, and injective.
It is therefore surjective in finite dimension, while surjectivity is
assumed in infinite dimension.

Let $0\ne A\in\BSH$ be noninvertible, and put
\[
 r=\|A\|,\qquad B=I-\frac{|A|}{r}.
\]
Since $0\in\sigma(A)$, we have $\|B\|=1$.
There are unit vectors $u_j,v_j$ and
$\lambda\in\{r,-r\}\cap\sigma(A)$ such that
\[
 (A-\lambda I)u_j\longrightarrow0,
 \qquad Av_j\longrightarrow0.
\]
Then $Bu_j\to0$ and $Bv_j-v_j\to0$, so
Lemma~\ref{lem:witness} gives $A\morth B$.
Since $\Phi(B)\ne0$, the relation
$\Phi(B)\sorth\Phi(A)$ implies that $\Phi(A)$ is noninvertible.
Thus $\Phi$ preserves noninvertibility, and $\Phi^{-1}$ maps invertible
self-adjoint operators to invertible operators.

Apply the theorem of Li, Rodman, and \v{S}emrl
\cite[Theorem~6]{LRS2003} to the real-linear bijection $\Phi^{-1}$.
Inverting the resulting representation gives an invertible $T\in\BH$
and $\varepsilon\in\{1,-1\}$ such that
\[
 \Phi(A)=\varepsilon TAT^*
 \quad\text{or}\quad
 \Phi(A)=\varepsilon TA^{\mathrm t}T^*
 \qquad(A\in\BSH).
\]
Here transpose is taken relative to a fixed conjugation $J$, so
$A^{\mathrm t}=JAJ$ on $\BSH$.
In either case, applying $\Phi$ to orthogonal rank-one projections
and using~\eqref{eq:rankone-kernel} shows that $T$ preserves vector
orthogonality.
Hence $T^*T=\rho I$ for some $\rho>0$, and $T=\sqrt\rho\,V$
with $V$ unitary.
Taking $c=\varepsilon\rho$ and $U=V$ or $VJ$ gives the asserted form.
The converse follows from Lemma~\ref{lem:standard}.
\end{proof}

\section{Sharpness of the hypotheses}\label{sec:remarks}

\begin{example}\label{ex:dimension-one}
The restriction $\dim H\ge2$ in Theorems~\ref{thm:full}
and~\ref{thm:selfadjoint} is necessary. On $\C$ and
$\mathcal B_s(\C)=\R$, the relation $a\morth b$ holds exactly when
$a=0$ or $b=0$, so every additive map preserves it in one direction.
The additive bijection $a\mapsto2a+\overline a$ on $\C$ is neither
$a\mapsto ca$ nor $a\mapsto c\overline a$. On $\R$, any non-real-linear
additive bijection gives the corresponding counterexample.
\end{example}

\Needspace{8\baselineskip}
\begin{example}\label{ex:embedding}
In infinite dimension, surjectivity cannot be replaced by injectivity in
Theorems~\ref{thm:full} and~\ref{thm:selfadjoint}.
The linear embedding
\[
 \iota:\mathcal B(H)\longrightarrow\mathcal B(H\oplus H),
 \qquad \iota(A)=A\oplus0,
\]
satisfies $w(\iota(A))=w(A)$ and preserves $\sorth$ in both directions.
Indeed, if $A\sorth B$, then compression to the first summand gives,
for every block operator $X$,
\[
 w\bigl((A\oplus0)+(B\oplus0)X\bigr)
 \ge w(A+BX_{11})\ge w(A).
\]
Conversely, choosing $X=X_{11}\oplus0$ gives the original defining
inequality. Hence $\iota$ also preserves the mutual relation in both
directions. As $H\cong H\oplus H$ in infinite dimension, this gives an
injective linear self-map of $\BH$ after a unitary identification.
Its value at $I$ is a proper projection, so it does not have the form
in Theorem~\ref{thm:full}.
Restricting the same embedding to self-adjoint operators gives the
corresponding example for Theorem~\ref{thm:selfadjoint}.
\end{example}

\begin{remark}\label{rem:nonadditive}
Additivity cannot be omitted from Theorems~\ref{thm:full}
and~\ref{thm:selfadjoint}. Every surjective operator $A\in\BH$ has a
bounded right inverse $Q$. For $B\ne0$, the identity
\[
 B+A(-QB)=0
\]
shows that $B\not\sorth A$. Thus $\mathcal N(A)=\{0\}$.
Consequently, any permutation of the surjective operators, extended by
the identity on all nonsurjective operators, preserves $\morth$ in both
directions. In particular, the bijection $\Theta$ interchanging $I$
and $2I$ and fixing every other operator is not additive, since
$\Theta(2I)=I\ne4I=2\Theta(I)$.
It restricts to a bijection of $\BSH$ with the same preservation
property. This construction works in every dimension.

In infinite dimension, interchanging $I$ with a surjective noninjective
coisometry and fixing every other operator gives a bijection of $\BH$
preserving $\morth$ in both directions. Thus the relation $\morth$
alone does not distinguish invertible operators from noninjective
surjective operators.
\end{remark}

\end{document}